\newtheorem{theorem}{Theorem}[subsection]
\newtheorem{definition}[theorem]{Definition}
\newtheorem{example}[theorem]{Example}
\newtheorem{lemma}[theorem]{Lemma}
\numberwithin{equation}{section}
\def\qed{\hfill {\hbox{${\vcenter{\vbox{             
   \hrule height 0.4pt\hbox{\vrule width 0.4pt height 6pt
   \kern5pt\vrule width 0.4pt}\hrule height 0.4pt}}}$}}}
\newenvironment{proof}[1][Proof]{\smallskip\noindent{\bf #1.}\quad}
{\qed\par\medskip}
\begin{document}



\begin{center}
\Large Into the Woods: A graph theoretic proof of the Dodgson/Muir identity\\
Melanie Fraser
\end{center}

\normalsize

\textbf{Abstract.} The Dodgson/Muir Identity is an identity on determinants of matrix minors that generalizes the Dodgson Identity. Using the matrix tree theorem, we create an equivalent forest identity on ordered sets of $k$ forests. We then prove the generalized Forest Identity, and by extension the Dodgson/Muir Identity, using an edge-swapping involution. This algorithm is a generalization of the Red Hot Potato algorithm, developed by the author in 2021 to prove the Dodgson Identity.

\section{Introduction}

This paper presents a graph theoretic interpretation of the Dodgson/Muir Identity. The Dodgson/Muir Identity is a generalization of Dodgson Identity, also known as the Lewis Carroll Identity, which has been used to develop Dodgson polynomials to study Feynman graphs and perterbative quantum field theory \cite{brown}, \cite{golz}, \cite{schnetz}. The Dodgson/Muir Identity is a determinantal identity which was given a combinatorial proof by Berliner and Brualdi \cite{brualdi}. Their proof is a generalization of Zeilberger's proof of Dodgson's Identity \cite{zeilberger}. The author gave a graph theoretic interpretation of Zeilberger's proof by interpreting Dodgson's Identity as a Forest Identity, and then proving the Forest Identity directly using an edge swapping argument \cite{fraser}. This paper will similarly interpret the Dodgson/Muir Identity as the Generalized Forest Identity, and then generalize the edge swapping argument in \cite{fraser} by extending the algorithm in much the same way as Berliner and Brualdi extended Zeilberger's combinatorial proof of Dodgson's Identity.

\begin{definition}
Let $U$ and $W$ be sets of nodes of the same size, and let $M[U,W]$ be the matrix $M$ with only the rows corresponding to the elements in $U$ and the columns corresponding to the elements in $W$ (with the rest of the rows and columns in $M$ removed).
\end{definition}

\begin{theorem}
\textbf{Dodgson/Muir Identity.} Let $M$ be a square $n\times n$ matrix, and let $k$ be any integer $1\leq k\leq n$. Then
\begin{equation}
\begin{split}
\det(M)\cdot\det&(M[\{k+1,\dots,n\},\{k+1,\dots,n\}])^{k-1}=\\
&\sum_{\sigma\in S_k}(-1)^{\iota(\sigma)}\prod_{i=1}^k\det(M[\{i,k+1,\dots,n\},\{\sigma(i),k+1,\dots,n\}]).
\end{split}
\end{equation}
\end{theorem}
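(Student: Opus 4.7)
The plan is to prove the Dodgson/Muir Identity in two stages: first, translate the determinantal identity into an equivalent combinatorial identity about $k$-tuples of spanning forests via the all-minors matrix tree theorem; second, prove that forest identity directly by means of a sign-reversing involution that generalizes the Red Hot Potato algorithm of \cite{fraser}.

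For the translation, I would rewrite each factor $\det(M[U,W])$ appearing on either side as a signed, weighted sum over spanning forests of an underlying edge-weighted graph on $n$ vertices, in which each tree of the forest contains exactly one vertex of $U$ and one of $W$ with a prescribed matching. Applied to the LHS $\det(M)\cdot\det(M[\{k+1,\ldots,n\},\{k+1,\ldots,n\}])^{k-1}$, this produces a signed sum indexed by $k$-tuples $(F_1,F_2,\ldots,F_k)$ where $F_1$ is a spanning tree (from $\det(M)$) and each $F_i$ for $i \geq 2$ is a spanning forest with components rooted at $\{k+1,\ldots,n\}$. Applied to the RHS, each product $\prod_{i=1}^k \det(M[\{i,k+1,\ldots,n\},\{\sigma(i),k+1,\ldots,n\}])$ becomes a signed sum over $k$-tuples whose $i$-th forest has components rooted at $\{i,k+1,\ldots,n\}$ matched to $\{\sigma(i),k+1,\ldots,n\}$, weighted by $(-1)^{\iota(\sigma)}$. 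Equating the two produces the Generalized Forest Identity, which reduces the problem to a purely combinatorial statement about cancellations.

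To prove this combinatorial identity, I would construct an involution $\Phi$ on the disjoint union of the two ensembles of $k$-tuples. Given a tuple, fix a total order on vertices and edges and scan for the smallest ``bad'' edge, meaning an edge whose endpoints lie in forest components whose roots are incompatible with the tuple's current indexing. Remove that edge from the forest where it lies, and reinsert it into the unique neighboring forest in which it restores the root-matching constraint. This single swap can create a new bad edge in the receiving forest, so the swap cascades like a hot potato through the $k$ forests until it terminates in a configuration belonging to the opposite side of the identity, with the permutation $\sigma$ modified by the transposition (or more generally, the cycle) recorded by the chain of swaps. The fixed points of $\Phi$ — tuples containing no bad edge — are exactly those common to both sides.

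The main obstacle will be showing that $\Phi$ is genuinely a well-defined involution with the correct sign behavior. Three points must be checked: that the cascade of swaps halts, that the reverse cascade recovers the original tuple (so $\Phi^2 = \Id$), and that the net effect on $\sigma$ produces the sign flip needed to pair opposite-signed terms. For $k=2$ these reduce to the single-swap argument in \cite{fraser}; for larger $k$ the cascade may pass through several forests and alter $\sigma$ by a non-adjacent transposition, so I expect the careful component-merging/splitting bookkeeping — analogous to the iteration in Berliner and Brualdi's proof \cite{brualdi} — to be the delicate part of the argument.
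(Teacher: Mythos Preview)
Your high-level plan --- translate via the all-minors matrix tree theorem to a forest identity, then prove that identity by an edge-swapping argument generalizing \cite{fraser} --- is exactly the paper's strategy. Two points, however, deserve correction or sharpening.

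First, a small but consequential slip in the translation: when you write that each $F_i$ for $i\geq 2$ is a spanning forest ``with components rooted at $\{k+1,\ldots,n\}$,'' you have the roots backward. In the matrix tree theorem the roots are the \emph{deleted} indices, not the retained ones; thus $\det(M[\{k+1,\ldots,n\},\{k+1,\ldots,n\}])$ yields forests rooted at $\{1,\ldots,k\}$ (or $\{0,1,\ldots,k\}$ once the auxiliary node $0$ is introduced, as the paper does to make an arbitrary $M$ into a Laplacian minor). The same correction applies to your description of the RHS forests.

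Second, and more substantively, the paper does \emph{not} build a single sign-reversing involution $\Phi$ directly on the disjoint union of the two forest ensembles. Instead it invokes the Involution Principle: it interpolates signed sets $S_0,S_1,S_2,S_3$, where $S_0$ and $S_3$ are the two sides of the forest identity and $S_1,S_2$ are larger sets of $k$-tuples of \emph{functional graphs} (each node has at most one out-edge, cycles allowed), with cycles and forbidden meta-cycles colored red or black and the sign given by the parity of red cycles. Three separate sign-reversing involutions $\phi_0,\phi_1,\phi_2$ act on the differences $S_i-S_{i+1}$; the nontrivial one, $\phi_1$, is itself a composite of $k-1$ crabwalk operations $\phi_1^i$, each of which is the $\phi_1$ of \cite{fraser}. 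The bijection between $S_0$ and $S_3$ is then obtained by iterating these involutions as in the Involution Principle.

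Your single-$\Phi$ sketch misses this scaffolding, and as written it cannot quite work: moving an edge from one forest to another will in general create a cycle, so the image of a swap is not a tuple of forests at all, and there is no natural notion of ``fixed points common to both sides'' since the two sides index structurally disjoint families of tuples. The intermediate sets $S_1,S_2$ exist precisely to absorb these cycle-bearing intermediate states, and the red/black coloring is what tracks the sign through the cascade. Your instinct that the cascade ``may pass through several forests and alter $\sigma$ by a non-adjacent transposition'' is correct in spirit, but the mechanism that realizes it is the chained $\phi_1^i$ crabwalks on the enlarged signed sets, not a direct bad-edge scan on forests.
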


Notice that when $k=2$, we get Dodgson's Identity. When $k=n$, we get the permutation definition of a determinant (since $M[\emptyset,\emptyset]$ is an empty matrix with determinant $1$).

\begin{example}
Let $M=\bordermatrix{&1'&2'&3'&4'\cr 1&3&7&0&0\cr 2&8&1&0&0\cr 3&0&0&4&0\cr 4&0&0&0&2}$, and let $k=3$. Then $\det(M)=-424$ and $\det(M[4,4])=2$, so the left hand side of the equation gives us $(-424)\cdot(2)^2=-1696$. For the right hand side, notice that only two of the permutations in $S_3$ give us a non-zero product, namely $\sigma_1=123$ and $\sigma_2=213$. For $\sigma_1$, we have that $\det(M[\{1,4\},\{1,4\}])=6$, $\det(M[\{2,4\},\{2,4\}])=2$, and $\det(M[\{3,4\},\{3,4\}])=8$. Since $\sigma_1$ has no inversions, its entry in the sum is $96$. For $\sigma_2$, we have that $\det(M[\{1,4\},\{2,4\}])=14$, $\det(M[\{2,4\},\{1,4\}])=16$, and $\det(M[\{3,4\},\{3,4\}])=8$. Since $\sigma_2$ has one inversion, it contributes negatively to the sum, giving us $-1792$. Adding the two terms of the right hand side together gives us that the right hand side also equals $-1696$.
\end{example}

We will interpret the Dodgson/Muir Identity in terms of sets of directed rooted forests. In our notion of a tree, each node has either one out-edge, or no out edges (it can have any number of in-edges). A root is a node in a tree with no out-edges such that every other node in the tree is connected to the root by a directed path. Thus in a directed rooted forest, every node has one out-edge except for the roots, which are exactly the nodes with no out-edges. We will call a forest made up of $k$ trees a $k$-forest.

We call a path from node $i$ to node $j$ a \textbf{meta-edge} $i\to j$. In a set of forests, if we have a meta-edge $i\to j$ in one forest, $j\to k$ in another, and so on before eventually having a forest with meta-edge $m\to i$, we call this a \textbf{meta-cycle}. For instance, in the following example, the edges $3\to 4$ and $4\to 2$ together form the meta-edge $3\to 2$; and the edge $1\to 3$ forms the meta-edge $1\to 3$; and the edges $2\to 5$ and $5\to 1$ together form the meta-edge $2\to 1$. Together these three meta-edges form a meta-cycle $3\to 2\to 1 \to 3$.

\begin{center}
\begin{tikzpicture}[baseline=0ex]
\node (0) at(0,0) {0};
\node (1) at(1,.5) {1};
\node (2) at(2,0) {2};
\node (3) at(2,-1) {3};
\node (4) at(1,-1.5) {4};
\node (5) at(0,-1) {5};
\draw[thick, ->, red, dashed] (3) edge (4);
\draw[thick, ->, red, dashed] (4) edge (2);
\draw[thick, ->] (5) edge (1);

\node (0) at(4,0) {0};
\node (1) at(5,.5) {1};
\node (2) at(6,0) {2};
\node (3) at(6,-1) {3};
\node (4) at(5,-1.5) {4};
\node (5) at(4,-1) {5};
\draw[thick, ->, red, dashed] (1) edge (3);
\draw[thick, ->] (4) edge (5);
\draw[thick, ->] (5) edge (2);

\node (0) at(8,0) {0};
\node (1) at(9,.5) {1};
\node (2) at(10,0) {2};
\node (3) at(10,-1) {3};
\node (4) at(9,-1.5) {4};
\node (5) at(8,-1) {5};
\draw[thick, ->, red, dashed] (2) edge (5);
\draw[thick, ->] (4) edge (2);
\draw[thick, ->, red, dashed] (5) edge (1);
\end{tikzpicture}
\end{center}

To give the Generalized Forest Identity, we will need to introduce some notation. Let $[k]=\{0,1,\dots,k\}$. Given weights on each directed edge of a forest, the forest weight $a_F$ is the product of each edge weight. Given a set $U$ of nodes, we will let $R_U$ be the set of forests with roots at exactly the nodes in $U$. Let $R_U^{i\to j}$ be the set of forests with roots in $U$ that also have a meta-edge from node $i$ to node $j$. For example, $R_{[k]\setminus\{2\}}^{2\to 4}$ is the set of forests with roots at $0,1,3,4,\dots,k$ that have a meta-edge $2\to 4$.
Finally, for a permutation $\sigma\in S_k$ for some $k$, let $R^{\sigma}=R_{[k]\setminus\{1\}}^{1\to\sigma(1)}\times R_{[k]\setminus\{2\}}^{2\to\sigma(2)}\times\cdots\times R_{[k]\setminus\{k\}}^{k\to\sigma(k)}$. We will let $R^{NF}$ stand for non-forbidden forests, which are the $k$-forests that are allowed as part of the Generalized Forest Identity, as defined below.

\begin{theorem}\label{genforestidentity}
\textbf{Generalized Forest Identity.} For any $k$, let $\sigma_{\text{id}}$ be the identity permutation in $S_k$, and let $R^{NF}$ be the set $R^{\sigma_{\text{id}}}\setminus \bigcup_{\sigma\in S_k, \sigma\neq \sigma_{\text{id}}}R^{\sigma}$. Then
\begin{equation}
\sum_{(F_1,F_2,\dots,F_k)\in R_0\times (R_{[k]'})^{k-1}}\prod_{i=1}^k a_{F_i}=\sum_{(F_1,F_2,\dots,F_k)\in R^{NF}}\prod_{i=1}^k a_{F_i}.
\end{equation}
\end{theorem}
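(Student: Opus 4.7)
The plan is to prove this identity by a weight-preserving involution that generalizes the Red Hot Potato algorithm from \cite{fraser}. Since both sides are positive sums, the involution must pair up ``surplus'' tuples on the larger (LHS) side and fix tuples that correspond to the smaller (RHS) side $R^{NF}$.

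The first step is to extract combinatorial data from each LHS tuple. Given $(F_1, F_2, \ldots, F_k) \in R_0 \times (R_{[k]'})^{k-1}$, for each node $i \in \{1, \ldots, k\}$ and each forest $F_j$, I would determine the meta-edge (if any) out of $i$ in $F_j$: in $F_1 \in R_0$, node $i$ has a unique meta-edge $i \to 0$ along its path to the root, while in $F_j$ for $j \geq 2$, either $i$ is itself a root of $F_j$ (and contributes no meta-edge there) or $i$ has a unique meta-edge to some root in $[k]'$. This encoding lets me classify each LHS tuple by the permutation $\sigma \in S_k$, if any, induced by reading off these meta-edges, and in particular to detect when the tuple lies in $R^\sigma$ for some $\sigma \neq \sigma_{\text{id}}$.

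Next, I would define the involution. If the meta-edge data induces some non-identity permutation (equivalently, the tuple lies in $R^\sigma$ for some $\sigma \neq \sigma_{\text{id}}$), then the data contains a non-trivial meta-cycle $i_1 \to i_2 \to \cdots \to i_m \to i_1$ realized across the forests $F_{i_1}, \ldots, F_{i_m}$. I would fix a canonical choice of such a cycle (for example, the one containing the smallest-indexed node, broken by lexicographic order on the remaining vertices) and cyclically reassign the underlying directed paths of these meta-edges among the forests that hold them. Because this operation only permutes edges across the layers without creating or destroying any, the weight product $\prod_{i} a_{F_i}$ is preserved. Applying the same canonical rule to the resulting tuple returns the original tuple, giving a fixed-point-free pairing on LHS tuples whose meta-data is non-unique; the fixed points are exactly the tuples whose data lies only in $R^{\sigma_{\text{id}}}$, and these biject weight-preservingly with $R^{NF}$.

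The main obstacle I anticipate is ensuring the canonical meta-cycle selection is well-defined when many meta-cycles are simultaneously present, and verifying that the swap preserves the structural constraints defining $R_0 \times (R_{[k]'})^{k-1}$: in particular, that $F_1$ remains a single tree rooted at $0$ and that the other $F_j$ retain exactly the prescribed root set $[k]'$. Careful combinatorial bookkeeping along the chosen cycle is required to check that no spurious cycles are introduced into any $F_j$ and no root conditions are violated, and that a second application of the rule selects the same cycle. This is the technical heart of the argument and is where the extension from the $k=2$ case in \cite{fraser} requires genuinely new ideas, mirroring the way Berliner and Brualdi \cite{brualdi} extended Zeilberger \cite{zeilberger}.
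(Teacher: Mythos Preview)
Your proposal has a structural misunderstanding that makes the plan unworkable as stated. In the LHS set $R_0\times (R_{[k]})^{k-1}$, the forests $F_2,\dots,F_k$ are $(k+1)$-forests rooted at \emph{all} of $0,1,\dots,k$; hence for every $i\in\{1,\dots,k\}$ and every $j\ge 2$, node $i$ is a root of $F_j$ and has no out-edge there. Your case ``or $i$ has a unique meta-edge to some root in $[k]'$'' never occurs. All out-edges from the special nodes live in the single tree $F_1$, where the induced map on $\{1,\dots,k\}$ sends each $i$ to the first special node on its path to $0$; since $F_1$ is a tree, this map is acyclic on $\{1,\dots,k\}$ and can never yield a non-identity permutation or a meta-cycle. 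So the ``detection'' step is vacuous, no LHS tuple lies in any $R^\sigma$, and your involution has nothing to toggle.

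There is a second gap even if the detection were repaired: your involution is designed to preserve membership in $R_0\times(R_{[k]})^{k-1}$, so its fixed points are still LHS-type tuples, not elements of $R^{NF}$, whose $i$th component is rooted at $[k]\setminus\{i\}$. You never explain a weight-preserving map carrying fixed points across to the RHS shape. The paper handles exactly this by \emph{not} working inside either side: it introduces intermediate signed sets $S_1,S_2$ of $k$-tuples of functional digraphs that may contain genuine cycles (with red/black colorings recording sign), defines sign-reversing involutions $\phi_0,\phi_1,\phi_2$ on the differences $S_0-S_1$, $S_1-S_2$, $S_2-S_3$, and then invokes the Involution Principle to thread a bijection from $S_0$ (the LHS) to $S_3$ (the RHS). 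The edge-moving that transforms the LHS shape into the RHS shape happens inside $\phi_1$ via the crabwalk, and the cancellation of forbidden meta-cycles happens across the signed sets, not by a single involution on one side.
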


The left hand side of this identity represents ordered sets of $k$ forests, one a tree rooted at zero and the remaining $k-1$ all $k+1$-forests rooted at $0,1,\dots,k$. The right hand side represents ordered sets of $k$ forests, the $i$th of which is a $k$-forest rooted at $0,1,\dots,i-1,i+1,\dots k$ such that there are no meta-cycles involving the first $k$ nodes. Since the $i$th forest is the only forest in the set with an edge out of node $i$, this means there are no meta-cycles such that the $i$th forest has meta-edge $i\to a_1$, the $a_1$-th forest has meta-edge $a_1\to a_2$, and so on until the $a_m$-th forest has meta-edge $a_m\to i$ where $i,a_j$ for all $1\leq j\leq m$ are all in the set $[k]$. A meta-cycle of this description is called a \textbf{forbidden meta-cycle}, and a set of $k$ forests with a forbidden meta-cycle is called a set of \textbf{forbidden forests}.

\begin{example}
Below is the case $n=k=3$:

We will begin with finding $R^{\sigma}$ for each $\sigma\in S_3$ not the identity.

$R^{132}$:
$\left(\begin{tikzpicture}[baseline=0ex]
\node (0) at(0,0) {0};
\node (1) at(1,1) {1};
\node (2) at(2,0) {2};
\node (3) at(1,-1) {3};
\draw[thick, ->] (1) edge (0);
\end{tikzpicture}
+
\begin{tikzpicture}[baseline=0ex]
\node (0) at(0,0) {0};
\node (1) at(1,1) {1};
\node (2) at(2,0) {2};
\node (3) at(1,-1) {3};
\draw[thick, ->] (1) edge (2);
\end{tikzpicture}
+
\begin{tikzpicture}[baseline=0ex]
\node (0) at(0,0) {0};
\node (1) at(1,1) {1};
\node (2) at(2,0) {2};
\node (3) at(1,-1) {3};
\draw[thick, ->] (1) edge (3);
\end{tikzpicture}\right)
\begin{tikzpicture}[baseline=0ex]
\draw[fill] (0,0) circle [radius=.05];
\end{tikzpicture}$

\hspace{15mm}$\left(\begin{tikzpicture}[baseline=0ex]
\node (0) at(0,0) {0};
\node (1) at(1,1) {1};
\node (2) at(2,0) {2};
\node (3) at(1,-1) {3};
\draw[thick, ->] (2) edge (3);
\end{tikzpicture}\right)\begin{tikzpicture}[baseline=0ex]
\draw[fill] (0,0) circle [radius=.05];
\end{tikzpicture}$
$\left(\begin{tikzpicture}[baseline=0ex]
\node (0) at(0,0) {0};
\node (1) at(1,1) {1};
\node (2) at(2,0) {2};
\node (3) at(1,-1) {3};
\draw[thick, ->] (3) edge (2);
\end{tikzpicture}\right)$

$R^{213}$:
$\left(\begin{tikzpicture}[baseline=0ex]
\node (0) at(0,0) {0};
\node (1) at(1,1) {1};
\node (2) at(2,0) {2};
\node (3) at(1,-1) {3};
\draw[thick, ->] (1) edge (2);
\end{tikzpicture}\right)\begin{tikzpicture}[baseline=0ex]
\draw[fill] (0,0) circle [radius=.05];
\end{tikzpicture}$
$\left(\begin{tikzpicture}[baseline=0ex]
\node (0) at(0,0) {0};
\node (1) at(1,1) {1};
\node (2) at(2,0) {2};
\node (3) at(1,-1) {3};
\draw[thick, ->] (2) edge (1);
\end{tikzpicture}\right)\begin{tikzpicture}[baseline=0ex]
\draw[fill] (0,0) circle [radius=.05];
\end{tikzpicture}$

\hspace{15mm}
$\left(\begin{tikzpicture}[baseline=0ex]
\node (0) at(0,0) {0};
\node (1) at(1,1) {1};
\node (2) at(2,0) {2};
\node (3) at(1,-1) {3};
\draw[thick, ->] (3) edge (0);
\end{tikzpicture}
+
\begin{tikzpicture}[baseline=0ex]
\node (0) at(0,0) {0};
\node (1) at(1,1) {1};
\node (2) at(2,0) {2};
\node (3) at(1,-1) {3};
\draw[thick, ->] (3) edge (1);
\end{tikzpicture}
+
\begin{tikzpicture}[baseline=0ex]
\node (0) at(0,0) {0};
\node (1) at(1,1) {1};
\node (2) at(2,0) {2};
\node (3) at(1,-1) {3};
\draw[thick, ->] (3) edge (2);
\end{tikzpicture}\right)$

$R^{231}$:
$\left(\begin{tikzpicture}[baseline=0ex]
\node (0) at(0,0) {0};
\node (1) at(1,1) {1};
\node (2) at(2,0) {2};
\node (3) at(1,-1) {3};
\draw[thick, ->] (1) edge (2);
\end{tikzpicture}\right)\begin{tikzpicture}[baseline=0ex]
\draw[fill] (0,0) circle [radius=.05];
\end{tikzpicture}$
$\left(\begin{tikzpicture}[baseline=0ex]
\node (0) at(0,0) {0};
\node (1) at(1,1) {1};
\node (2) at(2,0) {2};
\node (3) at(1,-1) {3};
\draw[thick, ->] (2) edge (3);
\end{tikzpicture}\right)\begin{tikzpicture}[baseline=0ex]
\draw[fill] (0,0) circle [radius=.05];
\end{tikzpicture}$
$\left(\begin{tikzpicture}[baseline=0ex]
\node (0) at(0,0) {0};
\node (1) at(1,1) {1};
\node (2) at(2,0) {2};
\node (3) at(1,-1) {3};
\draw[thick, ->] (3) edge (1);
\end{tikzpicture}\right)$

$R^{312}$:
$\left(\begin{tikzpicture}[baseline=0ex]
\node (0) at(0,0) {0};
\node (1) at(1,1) {1};
\node (2) at(2,0) {2};
\node (3) at(1,-1) {3};
\draw[thick, ->] (1) edge (3);
\end{tikzpicture}\right)\begin{tikzpicture}[baseline=0ex]
\draw[fill] (0,0) circle [radius=.05];
\end{tikzpicture}$
$\left(\begin{tikzpicture}[baseline=0ex]
\node (0) at(0,0) {0};
\node (1) at(1,1) {1};
\node (2) at(2,0) {2};
\node (3) at(1,-1) {3};
\draw[thick, ->] (2) edge (1);
\end{tikzpicture}\right)\begin{tikzpicture}[baseline=0ex]
\draw[fill] (0,0) circle [radius=.05];
\end{tikzpicture}$
$\left(\begin{tikzpicture}[baseline=0ex]
\node (0) at(0,0) {0};
\node (1) at(1,1) {1};
\node (2) at(2,0) {2};
\node (3) at(1,-1) {3};
\draw[thick, ->] (3) edge (2);
\end{tikzpicture}\right)$

$R^{321}$:
$\left(\begin{tikzpicture}[baseline=0ex]
\node (0) at(0,0) {0};
\node (1) at(1,1) {1};
\node (2) at(2,0) {2};
\node (3) at(1,-1) {3};
\draw[thick, ->] (1) edge (3);
\end{tikzpicture}\right)\begin{tikzpicture}[baseline=0ex]
\draw[fill] (0,0) circle [radius=.05];
\end{tikzpicture}$
$\left(\begin{tikzpicture}[baseline=0ex]
\node (0) at(0,0) {0};
\node (1) at(1,1) {1};
\node (2) at(2,0) {2};
\node (3) at(1,-1) {3};
\draw[thick, ->] (2) edge (0);
\end{tikzpicture}
+
\begin{tikzpicture}[baseline=0ex]
\node (0) at(0,0) {0};
\node (1) at(1,1) {1};
\node (2) at(2,0) {2};
\node (3) at(1,-1) {3};
\draw[thick, ->] (2) edge (1);
\end{tikzpicture}
+
\begin{tikzpicture}[baseline=0ex]
\node (0) at(0,0) {0};
\node (1) at(1,1) {1};
\node (2) at(2,0) {2};
\node (3) at(1,-1) {3};
\draw[thick, ->] (2) edge (3);
\end{tikzpicture}\right)\begin{tikzpicture}[baseline=0ex]
\draw[fill] (0,0) circle [radius=.05];
\end{tikzpicture}$

\hspace{15mm}
$\left(\begin{tikzpicture}[baseline=0ex]
\node (0) at(0,0) {0};
\node (1) at(1,1) {1};
\node (2) at(2,0) {2};
\node (3) at(1,-1) {3};
\draw[thick, ->] (3) edge (1);
\end{tikzpicture}\right)$

Notice that the single elements in each product together form forbidden meta-cycles, which is why all of the above possibilities are forbidden. There are a total of $11$ forbidden forests listed above. To get the right hand side, we remove these from the total set of $27$ from the identity, listed below, leaving $16$ elements:

$
\left(\begin{tikzpicture}[baseline=0ex]
\node (0) at(0,0) {0};
\node (1) at(1,1) {1};
\node (2) at(2,0) {2};
\node (3) at(1,-1) {3};
\draw[thick, ->] (1) edge (0);
\end{tikzpicture}
+
\begin{tikzpicture}[baseline=0ex]
\node (0) at(0,0) {0};
\node (1) at(1,1) {1};
\node (2) at(2,0) {2};
\node (3) at(1,-1) {3};
\draw[thick, ->] (1) edge (2);
\end{tikzpicture}
+
\begin{tikzpicture}[baseline=0ex]
\node (0) at(0,0) {0};
\node (1) at(1,1) {1};
\node (2) at(2,0) {2};
\node (3) at(1,-1) {3};
\draw[thick, ->] (1) edge (3);
\end{tikzpicture}\right)\begin{tikzpicture}[baseline=0ex]
\draw[fill] (0,0) circle [radius=.05];
\end{tikzpicture}$

$\left(\begin{tikzpicture}[baseline=0ex]
\node (0) at(0,0) {0};
\node (1) at(1,1) {1};
\node (2) at(2,0) {2};
\node (3) at(1,-1) {3};
\draw[thick, ->] (2) edge (0);
\end{tikzpicture}
+
\begin{tikzpicture}[baseline=0ex]
\node (0) at(0,0) {0};
\node (1) at(1,1) {1};
\node (2) at(2,0) {2};
\node (3) at(1,-1) {3};
\draw[thick, ->] (2) edge (1);
\end{tikzpicture}
+
\begin{tikzpicture}[baseline=0ex]
\node (0) at(0,0) {0};
\node (1) at(1,1) {1};
\node (2) at(2,0) {2};
\node (3) at(1,-1) {3};
\draw[thick, ->] (2) edge (3);
\end{tikzpicture}\right)\begin{tikzpicture}[baseline=0ex]
\draw[fill] (0,0) circle [radius=.05];
\end{tikzpicture}$

$\left(\begin{tikzpicture}[baseline=0ex]
\node (0) at(0,0) {0};
\node (1) at(1,1) {1};
\node (2) at(2,0) {2};
\node (3) at(1,-1) {3};
\draw[thick, ->] (3) edge (0);
\end{tikzpicture}
+
\begin{tikzpicture}[baseline=0ex]
\node (0) at(0,0) {0};
\node (1) at(1,1) {1};
\node (2) at(2,0) {2};
\node (3) at(1,-1) {3};
\draw[thick, ->] (3) edge (1);
\end{tikzpicture}
+
\begin{tikzpicture}[baseline=0ex]
\node (0) at(0,0) {0};
\node (1) at(1,1) {1};
\node (2) at(2,0) {2};
\node (3) at(1,-1) {3};
\draw[thick, ->] (3) edge (2);
\end{tikzpicture}\right)$

For the left hand side, we get any of the possible $16$ trees rooted at zero as the first element, followed by two copies of the $4$-forest with nodes $0,1,2,3$ and no edges. This gives us a total of $16$ elements in the left hand side. We see that taking an element from the right hand side and combining all of the edges into one graph while leaving the other two empty will give us an element from the left hand side.

\end{example}

\section[Connecting Identities]{Matrix Tree Theorem connection between Generalized Forest and Dodgson/Muir Identities}

We can use the All Minors Matrix Tree Theorem \cite{chaiken} to derive the Generalized Forest Identity from the Dodgson/Muir Identity.

\begin{definition}
Let $a_ij$ be the weight of the edge $i\to j$. Define the \textbf{Laplacian} $A$ by 
\[A_{ij}= \begin{cases} 
      -a_{ij} & i\neq j \\
      \displaystyle\sum_{k\neq i}a_{ik} & i=j
   \end{cases}
\]
\end{definition}

Let $A$ be the Laplacian encoding the complete directed graph on labeled nodes $\{0,1,\dots n\}$. Replace $M$ in the Dodgson/Muir Identity with $A$ with the zeroth row and column removed, that is $A_{0,0}$. Then we obtain the following:

\begin{equation}\label{gendodgson}
\begin{split}
\det(A[\{1,\dots,n\},\{1,\dots&,n\}])\cdot\det(A[\{k+1,\dots,n\},\{k+1,\dots,n\}])^{k-1}=\\
&\sum_{\sigma\in S_k}(-1)^{\iota(\sigma)}\prod_{i=1}^k\det(A[\{i,k+1,\dots,n\},\{\sigma(i),k+1,\dots,n\}]).
\end{split}
\end{equation}

Now we can use the Matrix Tree Theorem \cite{chaiken} to compare Eq. \ref{gendodgson} to the Generalized Forest Identity (Theorem \ref{genforestidentity}). By the All-Minor's Matrix Tree Theorem, the determinant of a minor of the Laplacian corresponds to signed forests rooted at the removed rows in which exactly one node from the rows removed and one node from the columns removed are in each tree. The sign of the forest is the sign it contributes to the determinant of the Laplacian.

The Matrix Tree Theorem tells us that $\det(A[\{1,\dots,n\},\{1,\dots,n\}])$ corresponds to spanning trees rooted at zero and $\det(A[\{k+1,\dots,n\},\{k+1,\dots,n\}])$ corresponds to $k+1$-forests rooted at $0,1,\dots, k$. Thus the left hand side of Eq. \ref{gendodgson} corresponds to the left hand side of the Generalized Forest Identity. The Matrix Tree Theorem also tells us that $\prod_{i=1}^k\det(A[\{i,k+1,\dots,n\},\{\sigma(i),k+1,\dots,n\}])$ will give us sets of $k$ forests, the $i$th of which is a $k$-forest rooted at $0,1,\dots,i-1,i+1,\dots k$. We now need to interpret what the permutation $\sigma$ in the right hand side of Eq. \ref{gendodgson} means.

\begin{lemma}\label{sigma}
In the right hand side of Eq. \ref{gendodgson}, if $\sigma(i)\neq i$, then $i$ is in the tree rooted at $\sigma(i)$. If $\sigma(i)=i$, then $i$ could be in any tree. 
\end{lemma}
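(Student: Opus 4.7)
The plan is to apply the All-Minors Matrix Tree Theorem directly to the minor $A[\{i,k+1,\dots,n\},\{\sigma(i),k+1,\dots,n\}]$ and unpack what the corresponding forests look like. The rows of $A$ that have been removed form the set $\{0,1,\dots,k\}\setminus\{i\}$, and the columns removed form $\{0,1,\dots,k\}\setminus\{\sigma(i)\}$. The theorem says this minor enumerates $k$-forests on $\{0,1,\dots,n\}$ rooted at $\{0,1,\dots,k\}\setminus\{i\}$, with the extra condition that every tree contains exactly one node from the rows removed (automatic, since that one node is precisely the root of the tree) and exactly one node from the columns removed.

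If $\sigma(i)=i$, the rows removed and columns removed coincide, so the column condition reduces to the root condition and imposes no further restriction on the forest. In particular $i$, which is a non-root, is free to sit in any of the $k$ trees, giving the second half of the lemma.

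Suppose instead $\sigma(i)\neq i$. The non-roots of the forest are $\{i\}\cup\{k+1,\dots,n\}$, and among these only $i$ lies in the columns removed. Now isolate the tree rooted at $\sigma(i)$: its root $\sigma(i)$ is \emph{not} in the columns removed, so for the column condition to be satisfied this tree must contain some other node of $\{0,1,\dots,k\}\setminus\{\sigma(i)\}$; every node of that set except $i$ is already pinned as the root of a \emph{different} tree, so the only remaining candidate is $i$. Conversely, every other tree is rooted at some $j\in\{0,1,\dots,k\}\setminus\{\sigma(i),i\}$, and that root already accounts for its one node from the columns removed, so such a tree cannot also contain $i$. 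Together these force $i$ to lie in the tree rooted at $\sigma(i)$.

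The only real obstacle here is bookkeeping: keeping straight which set is \quotemarks{rows removed}, which is \quotemarks{columns removed}, which nodes are roots, and which are non-roots drawn from $\{0,1,\dots,k\}$, especially when $\sigma(i)=i$ versus $\sigma(i)\neq i$. Once those four sets are written out side by side, the lemma collapses to a one-line counting argument from the All-Minors Matrix Tree Theorem.
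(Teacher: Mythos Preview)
Your proof is correct and follows essentially the same approach as the paper: both arguments apply the All-Minors Matrix Tree Theorem, identify the removed-row set $\{0,\dots,k\}\setminus\{i\}$ as the roots and the removed-column set $\{0,\dots,k\}\setminus\{\sigma(i)\}$ as the set needing one representative per tree, and then pair $i$ with $\sigma(i)$ as the only unmatched elements. Your version spells out the pairing a bit more explicitly (arguing both that the tree at $\sigma(i)$ must contain $i$ and that no other tree can), but the logic is the same.
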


\begin{proof}
Let us begin by supposing that $\sigma(i)=i$. Then the indices of rows and columns removed in $\prod_{i=1}^k\det(A[\{i,k+1,\dots,n\},\{\sigma(i),k+1,\dots,n\}])$ match, so there is no restriction on which tree $i$ belongs to. 

Now suppose that $\sigma(i)\neq i$. The Matrix Tree Theorem tells us that if $U$ is the set of indices of removed rows and $W$ is the set of indices of removed columns, then each tree in the resulting forest must contain exactly one element of $U$ and exactly one element of $W$. Then $\sigma(i)$ is a root since every row in $\{0,\dots,k\}$ is removed except for $i$, and $\sigma(i)\neq i$. The columns removed from $A$ are all columns between $0$ and $k$ except for $\sigma(i)$. Thus each of $0$ through $k$ except for $i$ and $\sigma(i)$ are removed as both a row and a column and must therefore be part of their own trees. Then all elements in the rows and columns are automatically paired except for $i$ in the removed columns and $\sigma(i)$ in the removed rows. Thus $i$ must be part of the tree rooted at $\sigma(i)$.
\end{proof}

Notice that as a result of the Lemma \ref{sigma}, $\sigma$ is related to meta-cycles since it dictates which tree node $i$ is a part of. Now we need to determine which permutations in $S_k$ give us negative signs so that we can see which of these sets of forests are kept and which are subtracted away.

We claim that any set of $k$-forests with a forbidden meta-cycle will end up canceling out in the right hand side of Eq. \ref{gendodgson}, and any non-forbidden set of $k$-forests will show up exactly one time. If that is true, then right hand side of Eq. \ref{gendodgson} will correspond to the right hand side of the Generalized Forest Identity (Theorem \ref{genforestidentity}).

We can associate a graph to a permutation array (a matrix with exactly one element from $A$ in each row and column) by putting an edge $i\to j$ in our graph if there is the monomial $-a_{ij}$ in the $i$th row and $j$th column, or if there is the monomial $a_{ij}$ on the diagonal in row $i$. Notice that elements in the permutation arrays can only be off the diagonal (in spaces where their row and column indices do not match) if they form a cycle or meta-cycle in the graph. A cycle can therefore be encoded either on the diagonal or off the diagonal, whereas edges not involved in cycles must be encoded on the diagonal (in spaces with matching row and column indices). If we wish to change one entry from off-diagonal to on-diagonal or vice versa, we must change every entry involved in the cycle or meta-cycle. If a meta-cycle is being encoded on the diagonal, we will color it black, and if it is being encoded off-diagonal, we will color it red.

\begin{definition}
If we change a meta-cycle from being represented on the diagonal to off-diagonal or vice versa, then this is called \textbf{toggling the diagonality} of the associated array. This is the array equivalent of changing the color of a meta-cycle.
\end{definition}

\begin{lemma}\label{toggdiag}
Changing the color of one meta-cycle will change the sign of the set of graphs in the right hand side of Eq. \ref{gendodgson}.
\end{lemma}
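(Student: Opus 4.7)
The plan is to compute how the sign of a term in the right-hand side of Eq. \ref{gendodgson} changes when we recolor a single meta-cycle from black (on-diagonal) to red (off-diagonal). Each monomial contribution carries a sign of the form
$$(-1)^{\iota(\sigma)}\cdot\prod_{i=1}^k\operatorname{sgn}(\pi_i)\cdot(-1)^{N},$$
where $\pi_i$ denotes the permutation array selected in the $i$-th determinant and $N$ is the total number of off-diagonal entries $-a_{rc}$ chosen across all $k$ permutation arrays.

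First, I would write the meta-cycle as $M\colon a_1\to a_2\to\cdots\to a_\ell\to a_1$ with each $a_j\in\{1,\ldots,k\}$, and decompose each meta-edge $a_j\to a_{j+1}$ (indices cyclic modulo $\ell$) as the underlying path $a_j\to b^j_1\to\cdots\to b^j_{m_j}\to a_{j+1}$ in the $a_j$-th graph, with each intermediate $b^j_i\in\{k+1,\ldots,n\}$. In the black encoding $\sigma(a_j)=a_j$ for every $j$, each edge of $M$ arises from expanding the appropriate diagonal entry of $A$, and the rows along $M$ are fixed points of each $\pi_{a_j}$. In the red encoding, $\sigma$ gains the extra cycle $(a_1,a_2,\ldots,a_\ell)$, while in each $\pi_{a_j}$ the row $a_j$ is joined through $b^j_1,\ldots,b^j_{m_j}$ to the column $a_{j+1}$ by a chain of off-diagonal entries.

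Next, I would track the three sources of sign change separately. Composing $\sigma$ with the cycle $(a_1,\ldots,a_\ell)$ multiplies $(-1)^{\iota(\sigma)}$ by $(-1)^{\ell-1}$. Within the $a_j$-th determinant, the baseline bijection that sends $a_j\mapsto a_{j+1}$ and fixes every element of $\{k+1,\ldots,n\}$ has sign $+1$ under the natural orderings of the row and column sets (because $a_j,a_{j+1}<k+1$); the red-encoded $\pi_{a_j}$ differs from this baseline by a single cycle $(a_{j+1},b^j_1,\ldots,b^j_{m_j})$ of length $m_j+1$ on the column set, contributing a factor of $(-1)^{m_j}$. Meanwhile, the $m_j+1$ entries along the path switch from diagonal to off-diagonal, contributing a further $(-1)^{m_j+1}$ to $(-1)^N$.

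The per-meta-edge contribution is therefore $(-1)^{m_j}\cdot(-1)^{m_j+1}=-1$, and multiplying the $\ell$ such factors by the $\sigma$-contribution yields a net sign change of $(-1)^{\ell-1}\cdot(-1)^{\ell}=-1$. The main obstacle is the middle step: carefully verifying the sign of the baseline bijection in the non-principal minor and confirming that the red-encoded bijection factors as the baseline composed with exactly one $(m_j+1)$-cycle on the column set. Once that identification is in hand, the sign accounting above becomes routine, and the lemma follows.
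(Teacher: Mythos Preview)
Your proposal is correct and follows essentially the same decomposition as the paper: both split the sign into the contribution from $\operatorname{sgn}(\sigma)$ and the contribution from the $k$ determinant factors, and then observe that exactly one of these flips. The paper argues by cases on the parity of $\ell$ and invokes a cited fact (from \cite{sally}) that toggling the diagonality of a single array flips that array's sign; you instead compute this array-level flip explicitly as $(-1)^{m_j}\cdot(-1)^{m_j+1}=-1$ by tracking the permutation-sign and off-diagonal-entry contributions separately. The endpoint is the same identity $(-1)^{\ell-1}\cdot(-1)^{\ell}=-1$, so the only real difference is that your version unpacks what the paper blackboxes.
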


\begin{proof}
By Lemma \ref{sigma}, a cycle in a permutation $\sigma$ indicates a red meta-cycle in the set of graphs in the right hand side of Eq. \ref{gendodgson}. The sign of the set of graphs is given both by $\text{sgn}(\sigma)=(-1)^{\iota(\sigma)}$, and also by the sign of the product of determinants $\prod_{i=1}^k\det(A[\{i,k+1,\dots,n\},\{\sigma(i),k+1,\dots,n\}])$ representing the graphs. An alternate way of finding the sign of a permutation is by noting that the parity of inversions in a permutation is equal to the parity of even-length cycles in a permutation. Then each even-length cycle changes the sign of of the permutation, and odd-length cycles do not. Thus changing the color of an even-length meta-cycle adds or removes an even-length cycle to the permutation, and so changes the sign contributed by $\text{sgn}(\sigma)=(-1)^{\iota(\sigma)}$.

The sign of the product of determinants will also contribute to the overall sign of the element in the right hand side of Eq. \ref{gendodgson}. Toggling the diagonality of a particular array changes the sign that the array contributes to the determinant, \cite{sally}. Thus if we toggle the diagonality of an even-length meta-cycle, it toggles the diagonality (and thus changes the sign) of an even number of arrays, so the overall sign contributed to the determinant remains the same. If we toggle the diagonality of an odd-length meta-cycle, it toggles the diagonality of an odd number of arrays, so the overall sign contributed to the determinant will change. Since toggling the diagonality of an array corresponds to changing the color of a meta-cycle, then if we change the color of an odd-length meta-cycle, we will change the sign contributed by the product of determinants.

Combining these two observations, we see that changing the color of one meta-cycle (whether odd or even) will change the sign of the set of graphs in the right hand side of Eq. \ref{gendodgson}.
\end{proof}

\begin{theorem}
Any set of $k$-forests with a meta-cycle will cancel in the sum on the right hand side of Eq. \ref{gendodgson}, and any set of $k$-forests with no meta-cycles will be represented exactly one time.
\end{theorem}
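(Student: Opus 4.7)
The plan is to describe each term of the right-hand side of Eq. \ref{gendodgson} as a \emph{colored} tuple of $k$ forests, and then exhibit a sign-reversing involution that cancels every tuple carrying at least one meta-cycle. Expanding each determinant by its permutation arrays, a term of the RHS is indexed by (i) a permutation $\sigma\in S_k$, and (ii) for each $i$, a permutation array for the minor $A[\{i,k+1,\dots,n\},\{\sigma(i),k+1,\dots,n\}]$. By Lemma \ref{sigma} together with the array-to-graph correspondence set up just before Lemma \ref{toggdiag}, this data assembles into an underlying tuple $(F_1,\dots,F_k)$ of forests of the prescribed root shape, together with, for each meta-cycle of the tuple, a choice of whether it is red (encoded off-diagonal via a cycle of $\sigma$ and off-diagonal entries) or black (encoded entirely on-diagonal through fixed points of $\sigma$). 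Non-meta-cyclic meta-edges are forced on-diagonal, so the coloring of the meta-cycles is the only remaining degree of freedom.

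Next I would prove cancellation. Fix an underlying tuple $(F_1,\dots,F_k)$ containing at least one meta-cycle, and let $C$ be the canonical meta-cycle of the tuple---for instance, the meta-cycle containing the smallest-indexed node that lies on any meta-cycle. Define an involution $\iota$ on the colorings of $(F_1,\dots,F_k)$ by toggling the color of $C$. Because the set of meta-cycles of the tuple depends only on the underlying (uncolored) edges, $C$ is unambiguously determined by the tuple alone; hence $\iota$ is well-defined and is manifestly of order two. By Lemma \ref{toggdiag}, flipping the color of any single meta-cycle flips the overall sign of the term, so the paired colorings cancel in pairs and the net contribution of $(F_1,\dots,F_k)$ to the RHS is $0$.

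For the uncancelled contribution, suppose $(F_1,\dots,F_k)$ has no meta-cycle. If $\sigma\neq\sigma_{\text{id}}$, then any non-trivial cycle $(i_1,\dots,i_m)$ of $\sigma$ forces, via Lemma \ref{sigma}, the meta-edges $i_j\to i_{j+1}$ in $F_{i_j}$, producing a meta-cycle---contradicting the assumption. So $\sigma=\sigma_{\text{id}}$; moreover, since no meta-cycle is available to push off-diagonal, every array entry is forced on the diagonal. Hence the tuple arises exactly once, with sign $+1$ (the identity permutation contributes $+1$ and the all-on-diagonal array in each factor contributes the Matrix-Tree-positive sign).

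The main obstacle I anticipate is verifying the bijection $(\sigma,\text{arrays})\leftrightarrow(\text{tuple},\text{coloring})$ cleanly enough that the involution really pairs every occurrence of a meta-cyclic tuple. The delicate point is that the canonical meta-cycle $C$ must be determined by the underlying tuple alone, not by the colored data, so that toggling preserves $C$ and $\iota$ is genuinely an involution; choosing $C$ by a purely graph-theoretic rule such as the smallest-index criterion above handles this, because the meta-cycle structure of $(F_1,\dots,F_k)$ is a function of the uncolored edges. Once this is in place, the two observations above combine to prove the claim.
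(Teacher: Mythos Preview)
Your proposal is correct and follows essentially the same approach as the paper: both identify the $2^m$ colorings of a tuple with $m$ meta-cycles and use Lemma \ref{toggdiag} to show they cancel, with the only cosmetic difference being that you pair colorings via a sign-reversing involution (toggle a canonical meta-cycle) while the paper counts even versus odd subsets of $\{1,\dots,m\}$. The treatment of the meta-cycle-free case is identical.
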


\begin{proof}
Suppose that a set of non-colored $k$-forests has $m$ possible meta-cycles (in a colored graph, each of the $m$ meta-cycles could be colored red if represented off-diagonal or black if represented on-diagonal). Let us consider how many permutations in $S_k$ can result in this set of forests. Every $i$ not involved in one of these $m$ meta-cycles must have $\sigma(i)=i$. By Lemma \ref{sigma}, each of the $m$ meta-cycles can either be represented as a cycle in $\sigma$ (corresponding to the meta-cycle being colored red), or represented as fixed points in $\sigma$ (corresponding to the meta-cycle being colored black). Thus we have $2^m$ permutations representing that set of forests.

We want to show that for all $m\geq 1$, half of the $2^m$ permutations will contribute an overall plus sign to the right hand side of Eq. \ref{gendodgson} and half will contribute a minus sign, so overall the contribution of the forests with meta-cycles will cancel out. By Lemma \ref{toggdiag}, if an even number of meta-cycles get represented as cycles in $\sigma$, then the set of forests will be added to the sum, and if an odd number of meta-cycles get represented as cycles in $\sigma$, then the set of forests will be subtracted from the sum.

We can associate to each permutation a subset of $\{1,2,\dots,m\}$ where the subset tells us which of the meta-cycles should be represented as cycles in $\sigma$. Then the subsets of even size will have an even number of cycles in the permutation and will contribute positively, whereas the subsets of odd size will contribute negatively. But the number of even subsets of $m$ is the same as the number of odd subsets of $m$ for all $m\geq 1$. Thus the number of permutations that add a copy of our set of forests to the right hand side of Eq. \ref{gendodgson} is the same as the number of permutations that subtract a copy. Thus for all $m\geq 1$, a set of forests with $m$ meta-cycles will end up canceling out and will not be counted in the right hand side of Eq. \ref{gendodgson}.

Finally, if there are no meta-cycles in our set of forests, then the set can only be associated with the identity permutation, so it is added exactly one time to the right hand side of Eq. \ref{gendodgson}. 
\end{proof}

Thus the right hand side of Eq. \ref{gendodgson} matches the right hand side of the Generalized Forest Identity. Because Eq. \ref{gendodgson} is a direct consequence of the Dodgson/Muir Identity, the Dodgson/Muir Identity proves the Generalized Forest Identity. Our goal for the rest of the paper is to prove the Generalized Forest Identity directly, and then use it to prove the Dodgson/Muir Identity.

\section{Generalized Red Hot Potato algorithm}

The generalized Red Hot Potato algorithm is a generalization of the Red Hot Potato algorithm used to prove Dodgson's Identity \cite {fraser}. It is is based on a consequence of the Involution Principle \cite{garsia}, \cite{sally}. To start, we restate a few definitions found in \cite{fraser}. 

\begin{definition}
A \textbf{signed set} is a set $S$ that is partitioned into two pieces, $S^+$ and $S^-$, such that $S = S^+\sqcup S^-$. A \textbf{sign-reversing function} on $S$ is a function that sends elements from $S^+$ to $S^-$ and elements from $S^-$ to $S^+$.
\end{definition}

\begin{definition}\label{difference}
Given two signed sets, $S_1$ and $S_2$, the \textbf{difference} $S_1 - S_2$ is the
disjoint union of the two sets such that $(S_1-S_2)^+ = S_1^+ \sqcup S^-_2$
and $(S_1-S_2)^- = S^-_1 \sqcup S^+_2$.
\end{definition}

\begin{theorem}\label{invothm}
Given any sequence of signed sets $S_0, S_1, \dots , S_{m+1}$ where $S_0$ and 
$S_{m+1}$ contain only
positive elements, and sign-reversing involutions $\phi_0,\dots, \phi_k$ where $\phi_i: S_i-S_{i+1} \to S_i-S_{i+1}$, there
is a constructible bijection between $S_0$ and $S_{m+1}$.
\end{theorem}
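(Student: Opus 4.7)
The plan is to build a graph $G$ from the involutions and read the desired bijection off its connected components. Define $G$ to have vertex set $V=\bigsqcup_{i=0}^{m+1}(S_i^+\sqcup S_i^-)$, and for each $i\in\{0,1,\dots,m\}$ include the edge $\{x,\phi_i(x)\}$ for every $x\in(S_i-S_{i+1})^+$. Because each $\phi_i$ is a sign-reversing involution, the edges contributed by $\phi_i$ form a perfect matching of $S_i-S_{i+1}$. An interior vertex $v\in S_i^+\sqcup S_i^-$ with $1\le i\le m$ lies in both $S_{i-1}-S_i$ and $S_i-S_{i+1}$, so it has degree exactly $2$, while each vertex in $S_0^+\sqcup S_{m+1}^+$ has degree exactly $1$ (using $S_0^-=S_{m+1}^-=\emptyset$). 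Thus $G$ is a disjoint union of simple paths and simple cycles, and the set of path endpoints is precisely $S_0^+\sqcup S_{m+1}^+=S_0\sqcup S_{m+1}$.

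The crux is to show that every path has one endpoint in $S_0$ and one in $S_{m+1}$, rather than both endpoints on the same side. For this I would orient each edge $\{x,\phi_i(x)\}$ from the $+$-side vertex of $S_i-S_{i+1}$ to the $-$-side vertex. Unpacking Definition~\ref{difference}, a vertex $v\in S_i^+$ with $1\le i\le m$ sits in $(S_{i-1}-S_i)^-$ and in $(S_i-S_{i+1})^+$, so its $\phi_{i-1}$-edge is incoming and its $\phi_i$-edge is outgoing; symmetrically, a vertex $v\in S_i^-$ with $1\le i\le m$ has an outgoing $\phi_{i-1}$-edge and an incoming $\phi_i$-edge. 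Hence every interior vertex has exactly one in-edge and one out-edge, so the orientation is consistent along each path. Since vertices in $S_0^+$ are pure sources and vertices in $S_{m+1}^+$ are pure sinks, each oriented path must start in $S_0$ and terminate in $S_{m+1}$.

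The bijection $S_0\to S_{m+1}$ is then obtained by sending $x\in S_0$ to the $S_{m+1}$-endpoint of its path in $G$. Computationally this is exactly the shuttle algorithm: apply $\phi_0$ to $x$, then alternately apply $\phi_i$ or $\phi_{i-1}$ depending on which side of $S_i-S_{i+1}$ the current element sits in, continuing until we land in $S_{m+1}^+$. The inverse is the analogous shuttle starting from $S_{m+1}$. The main obstacle is the orientation bookkeeping in the second paragraph: one must carefully verify that the ``$+$-to-$-$'' rule is globally consistent at every interior vertex, since without that invariant a path could a priori loop back to another element of $S_0$ and fail to match the two endpoint sets.
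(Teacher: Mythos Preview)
The paper does not actually prove this theorem; it is stated as a known consequence of the Garsia--Milne Involution Principle and cited to \cite{garsia} and \cite{sally}, with only a one-sentence description of the shuttle algorithm afterward. Your argument is the standard proof: build the graph whose edges are the matchings given by the $\phi_i$, check degrees, and use the sign convention from Definition~\ref{difference} to orient the edges so that every interior vertex has in-degree and out-degree one. That orientation analysis is done correctly; in particular your unpacking of $(S_{i-1}-S_i)^{\pm}$ versus $(S_i-S_{i+1})^{\pm}$ at a vertex of $S_i^{\pm}$ is exactly what forces each path to run from $S_0$ to $S_{m+1}$, and the resulting path-following procedure is precisely the shuttle the paper alludes to.

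The only thing you leave implicit is finiteness of the $S_i$: without it a path leaving $S_0$ could be one-sided infinite rather than terminating in $S_{m+1}$, and the word ``constructible'' in the statement would lose its force. In the paper's setting all the sets are finite collections of tuples of graphs, so this is harmless, but it is worth stating as a hypothesis if you present the argument on its own.
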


The Involution Principle begins by applying $\phi_0$ to an element in $S_0$. Subsequent $\phi_i$ are iteratively applied to each output, where the appropriate $\phi_i$ is chosen based on the set that the previous output belongs to.

Our goal will be to find a sequence of signed sets and sign-reversing involutions satisfying the assumptions of Theorem \ref{invothm} such that $S_0$ is the set of $k$ forests matching the left hand side of Theorem \ref{genforestidentity}, and $S_m$ is the set of $k$ forests matching the right hand side of Theorem \ref{genforestidentity}.

\subsection{Sets}

Our signed sets will each involve $k$-tuples of graphs with edges colored either black or red (dashed). For ease of discussion, we will call the nodes $\{0,1,\dots,k\}$ \textbf{special nodes} and nodes $\{k+1,\dots,n\}$ \textbf{non-special nodes}.

Let $S_0$ be ordered sets of $k$ forests, the first being a tree rooted at node $0$, and the remaining $k-1$ being $(k+1)$-forests rooted at the special nodes. Note these are the sets involved in the left hand side of the Generalized Forest Identity. In this set, all edges will be black and all elements will be positive.

Let $S_1$ be ordered sets of $k$ graphs, the first with one edge out of every node except for node $0$, and the remaining $k-1$ graphs each with no edge out of the special nodes and one edge out of the rest of the nodes. All edges are colored black except for cycles, which can be colored red or black. If an even number of cycles in the set of graphs is red, then the set is positive. If an odd number of cycles is red, the set is negative.

Let $S_2$ be ordered sets of $k$ graphs, where the $i$th graph has one edge out of each non-special node and out of node $i$, and the remaining special nodes have no out-edges. In this case, we count any forbidden meta-cycle as a cycle. Each cycle can either be colored red or black. The rest of the edges must be colored black. If an even number of cycles is colored red, then the pair is positive. If an odd number of cycles is colored red, the pair is negative.

Let $S_3$ be ordered sets of $k$ non-forbidden $k$-forests, where the $i$th $k$-forest has roots at $0,\dots, i-1,i+1,\dots,k$. Note that these are the sets involved in the right hand side of the Generalized Forest Identity. In this set, all edges will be black and all elements will be positive.

Notice that when $k=2$, these sets are the sets defined in the original Red Hot Potato algorithm \cite{fraser}.

\subsection{Involutions}

We define involutions $\phi_0$, $\phi_1$, and $\phi_2$ on differences of sets and prove they are sign-reversing.

\subsubsection{$\phi_0$ and $\phi_2$}
We define $\phi_0:S_0-S_1\to S_0-S_1$ and $\phi_2:S_2-S_3\to S_2-S_3$ in the same way. Below, we will define $\phi_0$. We define $\phi_2$ in a similar way by replacing $S_0$ with $S_3$ and $S_1$ with $S_2$.
\begin{itemize}
\item Notice that $S_0\subset S_1$. If $t\in S_0\cap S_1$, then $\phi_0$ sends $t\in S_0$ to itself in $S_1$ and vice versa. This is clearly an involution. It is sign-reversing since $t$ is positive in both $S_0$ and $S_1$, so it is negative in $-S_1$.

\item If $t\in S_1$ and $t\notin S_0$, then there must be at least one cycle in the graphs. Then the involution changes the color of the cycle in the first graph in the ordered list that has a cycle (that is the graph with the edge out of the smallest node). If this graph has more than one cycle, the involution changes the color of the cycle involving the largest node. If there are no normal cycles but rather forbidden meta-cycles, we change the color of the forbidden meta-cycle involving the largest node (note that this only applies to $\phi_2$). Since we only have two colors, this is clearly an involution. It is also sign-reversing since changing the color of one cycle changes the parity of the number of red cycles.

\end{itemize}

Notice that when $k=2$, these involutions are the same $\phi_0$ and $\phi_2$ used in the Red Hot Potato algorithm \cite{fraser}.

\subsubsection{Crabwalk}
The involution $\phi_1:S_1-S_2\to S_1-S_2$ is the involution that actually moves edges back and forth between the graphs. When we talk about moving a red meta-edge $s\to a_1\to \dots\to a_r\to t$ from $A$ to $B$, we mean that we move all of the red edges in the $s\to t$ meta-edge from $A$ to $B$, and move all of the black edges in $B$ coming out of the nodes $a_1,a_2,\dots, a_r$ to $A$. We will begin by defining $\phi_1^i$, which moves edges between two graphs in our set of graphs. Let $\pi_{j}$ be the graph with an edge coming out of node $k$ (we will explain more below how to find the subscript $j$). Remove this graph from the list of graphs so that we now have an ordered sublist of $k-1$ graphs. Let the $i$th graph in the list be called $\tau_i$. Then $\phi_1^i$ will act on $\pi_{j}$ and $\tau_i$. We will call the result of $\phi_1^i(\{\pi_{j},\tau_i\})=\{\pi_{j\pm 1},\tau_i'\}$, where we choose the subscript of $\pi$ based on our list of graphs; this is explained more below. Let $m$ be the smallest node with $1\leq m\leq k$ such that there is an edge out of $m$ in one of the two graphs $\pi_j$ and $\tau_i$. We will call the graph that has an edge out of $m$ $A$ and will call the other graph $B$. We label as $F$ the graph with the edge out of $k$ and the other graph we label as $R$. We then perform the \textbf{crabwalk} using graphs $A$ and $B$. The crabwalk was defined in \cite{fraser}, and we will reproduce the definition here with minor adjustments.

\begin{itemize}
\item If the edge coming out of node $m$ is black, move that edge from $A$ to $B$ to form a new pair $\{C,D\}$ (where now $D$ has the edge out of node $m$).

\item If the edge coming out of node $m$ is red, then we move the edges designated by the \textbf{crabwalk}, defined as follows. Create a graph with the same node set as $F$, and with edge set the set of all red edges from $F$ and $R$. Color the edges coming from $F$ dark red and the edges coming from $R$ light red (dashed). This is the crabwalk colored graph. We will always move forward along dark red edges (edges from $F$), and backward along light red edges (edges from $R$). 

If the edge out of node $m$ is in $F$, then we begin by moving forward along the dark red edge coming out of $1$, changing its color to light red starting with the tail of the edge and then coloring the head. We continue along that meta-edge until we reach a node that has a light red edge going into it. We travel backwards along the light red meta-edge, changing first the head of the edge to dark red and then the tail to dark red, until we reach a node that has a dark red edge coming out of it. Then we travel forward along the dark red meta-edge, changing it to light red, until we reach a node that has a light red edge going into it. We continue in this manner until we have reached any of the nodes $0,\dots,k$.

If the edge out of node $m$ is in $R$, then let $v\in \{0,\dots,k\}$ be the node that $m$ is rooted at, that is to say that there is a meta-edge $m\to v$. Then we begin by moving backwards along the light red edge coming into node $v$, changing first the head of the edge to dark red and then the tail. We continue moving backwards along the meta-edge until we reach a node that has a dark red edge coming out of it. Then we travel forward along the dark red meta-edge, changing it to light red, and so on until we have reached any of the nodes $0,\dots,k$.

Returning to graphs $F$ and $R$, we move the red meta-edges that changed color in the crabwalk, so that all dark red edges are in $F$ and all light red ones are in $R$. Figure \ref{crabwalk} gives an example of the crabwalk.

\end{itemize}

Notice that $\phi_1^i$ is equivalent to $\phi_1$ from the original Red Hot Potato algorithm for all $i$ \cite{fraser}.

\begin{figure}
\begin{center}
\begin{tikzpicture}[scale=1.5]
\node (0) at(0,0) {0};
\node (1) at(1,.5) {1};
\node (2) at(2,0) {2};
\node (3) at(2,-1) {3};
\node (4) at(1,-1.5) {4};
\node (5) at(0,-1) {5};
\draw[thick, red, dashed, ->] (2) edge (3);
\draw[thick, red, dashed, ->] (3) edge (4);
\draw[thick, red, dashed, ->] (4) edge (1);

\node (0) at(4,0) {0};
\node (1) at(5,.5) {1};
\node (2) at(6,0) {2};
\node (3) at(6,-1) {3};
\node (4) at(5,-1.5) {4};
\node (5) at(4,-1) {5};
\draw[thick, red, dashed, ->] (1) edge (4);
\draw[thick, red, dashed, ->] (4) edge (2);

\end{tikzpicture} 

\vspace{5mm}

\hspace{69mm} $\downarrow$ Combine into colored crabwalk graph.

\vspace{5mm}

\begin{tikzpicture}[scale=1.5]
\node (0) at(0,0) {0};
\node (1) at(1,.5) {1};
\node (2) at(2,0) {2};
\node (3) at(2,-1) {3};
\node (4) at(1,-1.5) {4};
\node (5) at(0,-1) {5};
\draw[thick, black!50!red, ->] (2) edge (3);
\draw[thick, black!50!red, ->] (3) edge (4);
\draw[thick, black!50!red, bend left, ->] (4) edge (1);
\draw[thick, red!50, dashed, bend left, ->] (1) edge (4);
\draw[thick, red!50, dashed, ->] (4) edge (2);
\end{tikzpicture}

\vspace{5mm}

\hspace{33mm} $\downarrow$ Perform crabwalk.

\vspace{5mm}

\begin{tikzpicture}[scale=1.5]
\node (0) at(0,0) {0};
\node (1) at(1,.5) {1};
\node (2) at(2,0) {2};
\node (3) at(2,-1) {3};
\node (4) at(1,-1.5) {4};
\node (5) at(0,-1) {5};
\draw[thick, black!50!red, ->] (2) edge (3);
\draw[thick, black!50!red, ->] (3) edge (4);
\draw[thick, red!50, dashed, bend left, ->] (4) edge (1);
\draw[thick, red!50, dashed, bend left, ->] (1) edge (4);
\draw[thick, black!50!red, ->] (4) edge (2);
\end{tikzpicture}

\vspace{5mm}

\hspace{35mm} $\downarrow$ Separate back out.

\vspace{5mm}

\begin{tikzpicture}[scale=1.5]
\node (0) at(0,0) {0};
\node (1) at(1,.5) {1};
\node (2) at(2,0) {2};
\node (3) at(2,-1) {3};
\node (4) at(1,-1.5) {4};
\node (5) at(0,-1) {5};
\draw[thick, red, dashed, ->] (2) edge (3);
\draw[thick, red, dashed, ->] (3) edge (4);
\draw[thick, red, dashed, ->] (4) edge (2);

\node (0) at(4,0) {0};
\node (1) at(5,.5) {1};
\node (2) at(6,0) {2};
\node (3) at(6,-1) {3};
\node (4) at(5,-1.5) {4};
\node (5) at(4,-1) {5};
\draw[thick, red, dashed, bend left, ->] (1) edge (4);
\draw[thick, red, dashed, bend left, ->] (4) edge (1);

\end{tikzpicture} 
\caption{An illustration of the crabwalk on a pair of graphs from a $k=3$ set. All of the edges shown here are red meta-edges. The left-hand graph has an edge out of $3$, so we label it $F$ and the other graph $R$. The right-hand graph has an edge out of $m=1$, so because $m$ is in $R$, we begin by moving backwards along the meta-edge $1\to 2$, so we start by changing the color of the edge $4\to 2$. We then intersect the cycle from $F$, so we move forward along the edge $4\to 1$. \label{crabwalk}}
\end{center}
\end{figure}

\subsubsection{$\phi_1$}
Now let us put the $\phi_1^i$ together to make $\phi_1$. 
\begin{itemize}
\item If $a\in S_1$, then we define $\phi_1$ by first doing $\phi_1^1(\{\pi_1,\tau_1\})$, then $\phi_1^2(\{\pi_2,\tau_2\})$ (where $\pi_2$ is the result of $\phi_1^1$ and $\tau_2$ is from our original list of graphs), and so on until finally we do $\phi_1^{k-1}(\{\pi_{k-1},\tau_{k-1}\})$. Thus $\phi_1(\{\pi_1,\tau_1,\dots,\tau_{k-1}\})=\{\pi_{k},\tau_1',\dots,\tau_{k-1}'\}$. We are repeatedly changing $\pi$ in each iteration by using a new $\tau$ that was from our original list.

\item If $a\in S_2$, 
\begin{itemize}
\item If $\phi_1^i$ ends with a node in $\tau_i$ for all $1\leq i\leq k-1$, then we define $\phi_1$ by first doing $\phi_1^{k-1}(\{\pi_{k},\tau_{k-1}\})$, then $\phi_1^{k-2}(\{\pi_{k-1},\tau_{k-2}\})$ (where $\pi_{k-1}$ is the result of $\phi_1^{k-1}$ and $\tau_{k-2}$ is from our original list of graphs), and so on until finally we do $\phi_1^1(\{\pi_{2},\tau_1\})$. Thus $\phi_1(\{\pi_k,\tau_1,\dots,\tau_{k-1}\})=\{\pi_{1},\tau_1',\dots,\tau_{k-1}'\}$. We are repeatedly changing $\pi$ in each iteration by using a new $\tau$ that was from our original list.

\item If $\phi_1^i$ ends with a node in $\pi_{j+1}$ for any $j$, then we proceed as above until we reach the first $j$ for which this happens. Upon completing $\phi_1^j(\{\pi_{j+1},\tau_j\})$, we then perform $\phi_1^{j+1}(\{\pi_{j+1}', \tau_{j+1}'\})$ where $\pi_{j+1}'$ is the result of $\phi_1^j$ and $\tau_{j+1}'$ is the result of our first iteration of $\phi_1^{j+1}$. In effect, once we hit a $\phi_1^j$ that ends with a node in $\pi_{j+1}$, instead of continuing backwards down the $\phi_1^i$, we start going forwards back up the $\phi_1^i$. The result of $\phi_1^{j+1}(\{\pi_{j+1}', \tau_{j+1}'\})$ is $\{\pi_{j+2}',\tau_{j+1}''\}$. We keep going until we reach $\phi_1^{k-1}(\{\pi_{k-1}',\tau_{k-1}'\})$. Thus $$\phi_1(\{\pi_k,\tau_1,\dots,\tau_{k-1}\})=\{\pi_{k}',\tau_1,\dots,\tau_{j-1},\tau_j',\tau_{j+1}'',\dots,\tau_{k-1}''\}.$$ Notice that when we hit $\phi_1^j$, because we ended with a node in $\pi_{j+1}$, we must not have moved the edge out of $j$ from $\tau_j$ to $\pi_{j+1}$. Thus the resulting set of forests still has $\tau_i$ (with however many primes) having an edge out of node $i$ and $\pi_k'$ still having an edge out of $k$.
\end{itemize}
\end{itemize}

Notice that when $k=2$, $\phi_1$ is the same as $\phi_1$ from the original Red Hot Potato algorithm.

We will prove that $\phi_1$ is a sign-reversing involution in Section 5.

\subsection{Generalized Red Hot Potato algorithm}

It is shown in section 3.2.1 that both $\phi_0$ and $\phi_2$ are sign-reversing involutions. We claim that $\phi_1$ is also a sign-reversing involution, which we will prove in section 5. Then these sets and involutions satisfy the hypotheses of Theorem \ref{invothm}, so we have proved the Generalized Forest Identity. Following the algorithm for finding the bijection that the Involution Principle guarantees (see \cite{sally}), we can construct the bijection as follows:

We begin with a set of forests in $S_0$. We apply $\phi_0$ and then apply $\phi_1$. Then we change the color of the cycle dictated by $\phi_2$ and do $\phi_1$ again. Then we change the color of the appropriate cycle and so on. We finish when the output of $\phi_2$ is a set of forests in $S_3$.

\section{Example}

\begin{tikzpicture}[baseline=0ex]
\node (0) at(0,0) {0};
\node (1) at(1,.5) {1};
\node (2) at(2,0) {2};
\node (3) at(2,-1) {3};
\node (4) at(1,-1.5) {4};
\node (5) at(0,-1) {5};
\draw[thick, ->] (1) edge (5);
\draw[thick, ->] (2) edge (0);
\draw[thick, ->] (3) edge (4);
\draw[thick, ->] (4) edge (1);
\draw[thick, ->] (5) edge (0);

\node (0) at(4,0) {0};
\node (1) at(5,.5) {1};
\node (2) at(6,0) {2};
\node (3) at(6,-1) {3};
\node (4) at(5,-1.5) {4};
\node (5) at(4,-1) {5};
\draw[thick, ->] (4) edge (3);
\draw[thick, ->] (5) edge (4);

\node (0) at(8,0) {0};
\node (1) at(9,.5) {1};
\node (2) at(10,0) {2};
\node (3) at(10,-1) {3};
\node (4) at(9,-1.5) {4};
\node (5) at(8,-1) {5};
\draw[thick, ->] (4) edge (2);
\draw[thick, ->] (5) edge (2);
\end{tikzpicture} 

We begin by applying $\phi_1^1$ to the left two graphs. Since all the edges are black, we only move the one coming out of 1.

\vspace{10mm}

\begin{tikzpicture}[baseline=0ex]
\node (0) at(0,0) {0};
\node (1) at(1,.5) {1};
\node (2) at(2,0) {2};
\node (3) at(2,-1) {3};
\node (4) at(1,-1.5) {4};
\node (5) at(0,-1) {5};
\draw[thick, ->] (2) edge (0);
\draw[thick, ->] (3) edge (4);
\draw[thick, ->] (4) edge (1);
\draw[thick, ->] (5) edge (0);

\node (0) at(4,0) {0};
\node (1) at(5,.5) {1};
\node (2) at(6,0) {2};
\node (3) at(6,-1) {3};
\node (4) at(5,-1.5) {4};
\node (5) at(4,-1) {5};
\draw[thick, ->] (1) edge (5);
\draw[thick, ->] (4) edge (3);
\draw[thick, ->] (5) edge (4);

\node (0) at(8,0) {0};
\node (1) at(9,.5) {1};
\node (2) at(10,0) {2};
\node (3) at(10,-1) {3};
\node (4) at(9,-1.5) {4};
\node (5) at(8,-1) {5};
\draw[thick, ->] (4) edge (2);
\draw[thick, ->] (5) edge (2);
\end{tikzpicture} 

We now do the same to the left-most graph ($\pi$) and the right-most one.

\vspace{10mm}

\begin{tikzpicture}[baseline=0ex]
\node (0) at(0,0) {0};
\node (1) at(1,.5) {1};
\node (2) at(2,0) {2};
\node (3) at(2,-1) {3};
\node (4) at(1,-1.5) {4};
\node (5) at(0,-1) {5};
\draw[thick, ->] (3) edge (4);
\draw[thick, ->] (4) edge (1);
\draw[thick, ->] (5) edge (0);

\node (0) at(4,0) {0};
\node (1) at(5,.5) {1};
\node (2) at(6,0) {2};
\node (3) at(6,-1) {3};
\node (4) at(5,-1.5) {4};
\node (5) at(4,-1) {5};
\draw[thick, ->] (1) edge (5);
\draw[thick, ->] (4) edge (3);
\draw[thick, ->] (5) edge (4);

\node (0) at(8,0) {0};
\node (1) at(9,.5) {1};
\node (2) at(10,0) {2};
\node (3) at(10,-1) {3};
\node (4) at(9,-1.5) {4};
\node (5) at(8,-1) {5};
\draw[thick, ->] (2) edge (0);
\draw[thick, ->] (4) edge (2);
\draw[thick, ->] (5) edge (2);
\end{tikzpicture} 

We have finished with $\phi_1$. Now we notice that there is a meta-cycle from $1\to 3$ and $3\to 1$, so we change the color to red (dashed).

\vspace{10mm}

\begin{tikzpicture}[baseline=0ex]
\node (0) at(0,0) {0};
\node (1) at(1,.5) {1};
\node (2) at(2,0) {2};
\node (3) at(2,-1) {3};
\node (4) at(1,-1.5) {4};
\node (5) at(0,-1) {5};
\draw[thick, red, dashed, ->] (3) edge (4);
\draw[thick, red, dashed, ->] (4) edge (1);
\draw[thick, ->] (5) edge (0);

\node (0) at(4,0) {0};
\node (1) at(5,.5) {1};
\node (2) at(6,0) {2};
\node (3) at(6,-1) {3};
\node (4) at(5,-1.5) {4};
\node (5) at(4,-1) {5};
\draw[thick, red, dashed, ->] (1) edge (5);
\draw[thick, red, dashed, ->] (4) edge (3);
\draw[thick, red, dashed, ->] (5) edge (4);

\node (0) at(8,0) {0};
\node (1) at(9,.5) {1};
\node (2) at(10,0) {2};
\node (3) at(10,-1) {3};
\node (4) at(9,-1.5) {4};
\node (5) at(8,-1) {5};
\draw[thick, ->] (2) edge (0);
\draw[thick, ->] (4) edge (2);
\draw[thick, ->] (5) edge (2);
\end{tikzpicture} 

We now begin $\phi_1$ again, this time starting with the left-most graph ($\pi$) and the right-most one. Since the edge out of $2$ is black, we simply move that edge over.

\vspace{10mm}

\begin{tikzpicture}[baseline=0ex]
\node (0) at(0,0) {0};
\node (1) at(1,.5) {1};
\node (2) at(2,0) {2};
\node (3) at(2,-1) {3};
\node (4) at(1,-1.5) {4};
\node (5) at(0,-1) {5};
\draw[thick, ->] (2) edge (0);
\draw[thick, red, dashed, ->] (3) edge (4);
\draw[thick, red, dashed, ->] (4) edge (1);
\draw[thick, ->] (5) edge (0);

\node (0) at(4,0) {0};
\node (1) at(5,.5) {1};
\node (2) at(6,0) {2};
\node (3) at(6,-1) {3};
\node (4) at(5,-1.5) {4};
\node (5) at(4,-1) {5};
\draw[thick, red, dashed, ->] (1) edge (5);
\draw[thick, red, dashed, ->] (4) edge (3);
\draw[thick, red, dashed, ->] (5) edge (4);

\node (0) at(8,0) {0};
\node (1) at(9,.5) {1};
\node (2) at(10,0) {2};
\node (3) at(10,-1) {3};
\node (4) at(9,-1.5) {4};
\node (5) at(8,-1) {5};
\draw[thick, ->] (4) edge (2);
\draw[thick, ->] (5) edge (2);
\end{tikzpicture} 

Now we apply $\phi_1$ to the left-most graph and the middle one. When we apply the crabwalk, we first move backwards along the edge into node $3$ in the middle graph (because $3$ is the root of node $1$), and then forwards along the edge out of node $4$ in the first graph.

\vspace{10mm}

\begin{tikzpicture}[baseline=0ex]
\node (0) at(0,0) {0};
\node (1) at(1,.5) {1};
\node (2) at(2,0) {2};
\node (3) at(2,-1) {3};
\node (4) at(1,-1.5) {4};
\node (5) at(0,-1) {5};
\draw[thick, ->] (2) edge (0);
\draw[thick, red, dashed, bend left, ->] (3) edge (4);
\draw[thick, red, dashed, bend left, ->] (4) edge (3);
\draw[thick, ->] (5) edge (0);

\node (0) at(4,0) {0};
\node (1) at(5,.5) {1};
\node (2) at(6,0) {2};
\node (3) at(6,-1) {3};
\node (4) at(5,-1.5) {4};
\node (5) at(4,-1) {5};
\draw[thick, red, dashed, ->] (1) edge (5);
\draw[thick, red, dashed, ->] (4) edge (1);
\draw[thick, red, dashed, ->] (5) edge (4);

\node (0) at(8,0) {0};
\node (1) at(9,.5) {1};
\node (2) at(10,0) {2};
\node (3) at(10,-1) {3};
\node (4) at(9,-1.5) {4};
\node (5) at(8,-1) {5};
\draw[thick, ->] (4) edge (2);
\draw[thick, ->] (5) edge (2);
\end{tikzpicture} 

Notice that we ended that last iteration of $\phi_1^1$ in the left-most graph, $\pi$. That means that instead of being done with $\phi_1$, we move forward again by doing $\phi_1^2$ on the left-most graph and the right-most one.

\vspace{10mm}

\begin{tikzpicture}[baseline=0ex]
\node (0) at(0,0) {0};
\node (1) at(1,.5) {1};
\node (2) at(2,0) {2};
\node (3) at(2,-1) {3};
\node (4) at(1,-1.5) {4};
\node (5) at(0,-1) {5};
\draw[thick, red, dashed, bend left, ->] (3) edge (4);
\draw[thick, red, dashed, bend left, ->] (4) edge (3);
\draw[thick, ->] (5) edge (0);

\node (0) at(4,0) {0};
\node (1) at(5,.5) {1};
\node (2) at(6,0) {2};
\node (3) at(6,-1) {3};
\node (4) at(5,-1.5) {4};
\node (5) at(4,-1) {5};
\draw[thick, red, dashed, ->] (1) edge (5);
\draw[thick, red, dashed, ->] (4) edge (1);
\draw[thick, red, dashed, ->] (5) edge (4);

\node (0) at(8,0) {0};
\node (1) at(9,.5) {1};
\node (2) at(10,0) {2};
\node (3) at(10,-1) {3};
\node (4) at(9,-1.5) {4};
\node (5) at(8,-1) {5};
\draw[thick, ->] (2) edge (0);
\draw[thick, ->] (4) edge (2);
\draw[thick, ->] (5) edge (2);
\end{tikzpicture} 

We have now finished with $\phi_1$. Then we change the color of the cycle in the graph with an edge out of the smallest node, namely the cycle in the middle graph involving node $1$.

\vspace{10mm}

\begin{tikzpicture}[baseline=0ex]
\node (0) at(0,0) {0};
\node (1) at(1,.5) {1};
\node (2) at(2,0) {2};
\node (3) at(2,-1) {3};
\node (4) at(1,-1.5) {4};
\node (5) at(0,-1) {5};
\draw[thick, red, dashed, bend left, ->] (3) edge (4);
\draw[thick, red, dashed, bend left, ->] (4) edge (3);
\draw[thick, ->] (5) edge (0);

\node (0) at(4,0) {0};
\node (1) at(5,.5) {1};
\node (2) at(6,0) {2};
\node (3) at(6,-1) {3};
\node (4) at(5,-1.5) {4};
\node (5) at(4,-1) {5};
\draw[thick, ->] (1) edge (5);
\draw[thick, ->] (4) edge (1);
\draw[thick, ->] (5) edge (4);

\node (0) at(8,0) {0};
\node (1) at(9,.5) {1};
\node (2) at(10,0) {2};
\node (3) at(10,-1) {3};
\node (4) at(9,-1.5) {4};
\node (5) at(8,-1) {5};
\draw[thick, ->] (2) edge (0);
\draw[thick, ->] (4) edge (2);
\draw[thick, ->] (5) edge (2);
\end{tikzpicture} 

We now do $\phi_1$ again. We start by doing $\phi_1^2$ with the left-most and right-most graphs.

\vspace{10mm}

\begin{tikzpicture}[baseline=0ex]
\node (0) at(0,0) {0};
\node (1) at(1,.5) {1};
\node (2) at(2,0) {2};
\node (3) at(2,-1) {3};
\node (4) at(1,-1.5) {4};
\node (5) at(0,-1) {5};
\draw[thick, ->] (2) edge (0);
\draw[thick, red, dashed, bend left, ->] (3) edge (4);
\draw[thick, red, dashed, bend left, ->] (4) edge (3);
\draw[thick, ->] (5) edge (0);

\node (0) at(4,0) {0};
\node (1) at(5,.5) {1};
\node (2) at(6,0) {2};
\node (3) at(6,-1) {3};
\node (4) at(5,-1.5) {4};
\node (5) at(4,-1) {5};
\draw[thick, ->] (1) edge (5);
\draw[thick, ->] (4) edge (1);
\draw[thick, ->] (5) edge (4);

\node (0) at(8,0) {0};
\node (1) at(9,.5) {1};
\node (2) at(10,0) {2};
\node (3) at(10,-1) {3};
\node (4) at(9,-1.5) {4};
\node (5) at(8,-1) {5};
\draw[thick, ->] (4) edge (2);
\draw[thick, ->] (5) edge (2);
\end{tikzpicture} 

Now we do $\phi_1^1$ with the left-most and middle graphs.

\vspace{10mm}

\begin{tikzpicture}[baseline=0ex]
\node (0) at(0,0) {0};
\node (1) at(1,.5) {1};
\node (2) at(2,0) {2};
\node (3) at(2,-1) {3};
\node (4) at(1,-1.5) {4};
\node (5) at(0,-1) {5};
\draw[thick, ->] (1) edge (5);
\draw[thick, ->] (2) edge (0);
\draw[thick, red, dashed, bend left, ->] (3) edge (4);
\draw[thick, red, dashed, bend left, ->] (4) edge (3);
\draw[thick, ->] (5) edge (0);

\node (0) at(4,0) {0};
\node (1) at(5,.5) {1};
\node (2) at(6,0) {2};
\node (3) at(6,-1) {3};
\node (4) at(5,-1.5) {4};
\node (5) at(4,-1) {5};
\draw[thick, ->] (4) edge (1);
\draw[thick, ->] (5) edge (4);

\node (0) at(8,0) {0};
\node (1) at(9,.5) {1};
\node (2) at(10,0) {2};
\node (3) at(10,-1) {3};
\node (4) at(9,-1.5) {4};
\node (5) at(8,-1) {5};
\draw[thick, ->] (4) edge (2);
\draw[thick, ->] (5) edge (2);
\end{tikzpicture} 

We have now finished with $\phi_1$. We change the color of the remaining cycle.

\vspace{10mm}

\begin{tikzpicture}[baseline=0ex]
\node (0) at(0,0) {0};
\node (1) at(1,.5) {1};
\node (2) at(2,0) {2};
\node (3) at(2,-1) {3};
\node (4) at(1,-1.5) {4};
\node (5) at(0,-1) {5};
\draw[thick, ->] (1) edge (5);
\draw[thick, ->] (2) edge (0);
\draw[thick, bend left, ->] (3) edge (4);
\draw[thick, bend left, ->] (4) edge (3);
\draw[thick, ->] (5) edge (0);

\node (0) at(4,0) {0};
\node (1) at(5,.5) {1};
\node (2) at(6,0) {2};
\node (3) at(6,-1) {3};
\node (4) at(5,-1.5) {4};
\node (5) at(4,-1) {5};
\draw[thick, ->] (4) edge (1);
\draw[thick, ->] (5) edge (4);

\node (0) at(8,0) {0};
\node (1) at(9,.5) {1};
\node (2) at(10,0) {2};
\node (3) at(10,-1) {3};
\node (4) at(9,-1.5) {4};
\node (5) at(8,-1) {5};
\draw[thick, ->] (4) edge (2);
\draw[thick, ->] (5) edge (2);
\end{tikzpicture} 

Now we perform $\phi_1$ again, first moving the black edge out of $1$ in $\phi_1^1$, then the black edge out of $2$ in $\phi_1^2$.

\vspace{10mm}

\begin{tikzpicture}[baseline=0ex]
\node (0) at(0,0) {0};
\node (1) at(1,.5) {1};
\node (2) at(2,0) {2};
\node (3) at(2,-1) {3};
\node (4) at(1,-1.5) {4};
\node (5) at(0,-1) {5};
\draw[thick, bend left, ->] (3) edge (4);
\draw[thick, bend left, ->] (4) edge (3);
\draw[thick, ->] (5) edge (0);

\node (0) at(4,0) {0};
\node (1) at(5,.5) {1};
\node (2) at(6,0) {2};
\node (3) at(6,-1) {3};
\node (4) at(5,-1.5) {4};
\node (5) at(4,-1) {5};
\draw[thick, ->] (1) edge (5);
\draw[thick, ->] (4) edge (1);
\draw[thick, ->] (5) edge (4);

\node (0) at(8,0) {0};
\node (1) at(9,.5) {1};
\node (2) at(10,0) {2};
\node (3) at(10,-1) {3};
\node (4) at(9,-1.5) {4};
\node (5) at(8,-1) {5};
\draw[thick, ->] (2) edge (0);
\draw[thick, ->] (4) edge (2);
\draw[thick, ->] (5) edge (2);
\end{tikzpicture} 

Done with $\phi_1$, we now change the color of the cycle in the graph with an edge out of the smallest node, namely the cycle in the middle graph.

\vspace{10mm}

\begin{tikzpicture}[baseline=0ex]
\node (0) at(0,0) {0};
\node (1) at(1,.5) {1};
\node (2) at(2,0) {2};
\node (3) at(2,-1) {3};
\node (4) at(1,-1.5) {4};
\node (5) at(0,-1) {5};
\draw[thick, bend left, ->] (3) edge (4);
\draw[thick, bend left, ->] (4) edge (3);
\draw[thick, ->] (5) edge (0);

\node (0) at(4,0) {0};
\node (1) at(5,.5) {1};
\node (2) at(6,0) {2};
\node (3) at(6,-1) {3};
\node (4) at(5,-1.5) {4};
\node (5) at(4,-1) {5};
\draw[thick, red, dashed, ->] (1) edge (5);
\draw[thick, red, dashed, ->] (4) edge (1);
\draw[thick, red, dashed, ->] (5) edge (4);

\node (0) at(8,0) {0};
\node (1) at(9,.5) {1};
\node (2) at(10,0) {2};
\node (3) at(10,-1) {3};
\node (4) at(9,-1.5) {4};
\node (5) at(8,-1) {5};
\draw[thick, ->] (2) edge (0);
\draw[thick, ->] (4) edge (2);
\draw[thick, ->] (5) edge (2);
\end{tikzpicture} 

When we apply $\phi_1$, we first apply $\phi_1^2$ to move the black edge out of $2$ to the left-most graph. Then we apply $\phi_1^1$ to the left-most and middle graphs, moving the entirety of the red cycle, as well as the black edges out of nodes $4$ and $5$ along the way.

\vspace{10mm}

\begin{tikzpicture}[baseline=0ex]
\node (0) at(0,0) {0};
\node (1) at(1,.5) {1};
\node (2) at(2,0) {2};
\node (3) at(2,-1) {3};
\node (4) at(1,-1.5) {4};
\node (5) at(0,-1) {5};
\draw[thick, red, dashed, ->] (1) edge (5);
\draw[thick, ->] (2) edge (0);
\draw[thick, ->] (3) edge (4);
\draw[thick, red, dashed, ->] (4) edge (1);
\draw[thick, red, dashed, ->] (5) edge (4);

\node (0) at(4,0) {0};
\node (1) at(5,.5) {1};
\node (2) at(6,0) {2};
\node (3) at(6,-1) {3};
\node (4) at(5,-1.5) {4};
\node (5) at(4,-1) {5};
\draw[thick, ->] (4) edge (3);
\draw[thick, ->] (5) edge (0);

\node (0) at(8,0) {0};
\node (1) at(9,.5) {1};
\node (2) at(10,0) {2};
\node (3) at(10,-1) {3};
\node (4) at(9,-1.5) {4};
\node (5) at(8,-1) {5};
\draw[thick, ->] (4) edge (2);
\draw[thick, ->] (5) edge (2);
\end{tikzpicture} 

Done with $\phi_1$, we now change the color of the cycle in the left-most graph.

\vspace{10mm}

\begin{tikzpicture}[baseline=0ex]
\node (0) at(0,0) {0};
\node (1) at(1,.5) {1};
\node (2) at(2,0) {2};
\node (3) at(2,-1) {3};
\node (4) at(1,-1.5) {4};
\node (5) at(0,-1) {5};
\draw[thick, ->] (1) edge (5);
\draw[thick, ->] (2) edge (0);
\draw[thick, ->] (3) edge (4);
\draw[thick, ->] (4) edge (1);
\draw[thick, ->] (5) edge (4);

\node (0) at(4,0) {0};
\node (1) at(5,.5) {1};
\node (2) at(6,0) {2};
\node (3) at(6,-1) {3};
\node (4) at(5,-1.5) {4};
\node (5) at(4,-1) {5};
\draw[thick, ->] (4) edge (3);
\draw[thick, ->] (5) edge (0);

\node (0) at(8,0) {0};
\node (1) at(9,.5) {1};
\node (2) at(10,0) {2};
\node (3) at(10,-1) {3};
\node (4) at(9,-1.5) {4};
\node (5) at(8,-1) {5};
\draw[thick, ->] (4) edge (2);
\draw[thick, ->] (5) edge (2);
\end{tikzpicture} 

We apply $\phi_1$ once more, first moving the black edge out of $1$ with $\phi_1^1$, and then the black edge out of $2$ with $\phi_1^2$.

\vspace{10mm}

\begin{tikzpicture}[baseline=0ex]
\node (0) at(0,0) {0};
\node (1) at(1,.5) {1};
\node (2) at(2,0) {2};
\node (3) at(2,-1) {3};
\node (4) at(1,-1.5) {4};
\node (5) at(0,-1) {5};
\draw[thick, ->] (3) edge (4);
\draw[thick, ->] (4) edge (1);
\draw[thick, ->] (5) edge (4);

\node (0) at(4,0) {0};
\node (1) at(5,.5) {1};
\node (2) at(6,0) {2};
\node (3) at(6,-1) {3};
\node (4) at(5,-1.5) {4};
\node (5) at(4,-1) {5};
\draw[thick, ->] (1) edge (5);
\draw[thick, ->] (4) edge (3);
\draw[thick, ->] (5) edge (0);

\node (0) at(8,0) {0};
\node (1) at(9,.5) {1};
\node (2) at(10,0) {2};
\node (3) at(10,-1) {3};
\node (4) at(9,-1.5) {4};
\node (5) at(8,-1) {5};
\draw[thick, ->] (2) edge (0);
\draw[thick, ->] (4) edge (2);
\draw[thick, ->] (5) edge (2);
\end{tikzpicture} 

When we apply $\phi_2$ to this, we get the same thing back again. We now have an appropriate set of forests, so we are done!

\section{Proof that $\phi_1$ is a sign-reversing involution}

Because each $\phi_1^i$ is equivalent to $\phi_1$ from the original Red Hot Potato algorithm, we will restate below several Lemmas whose proofs can be found in \cite{fraser}. These Lemmas were originally about $\phi_1$ from the Red Hot Potato algorithm, but will be rephrased here to apply to each individual $\phi_1^i$.

\begin{lemma}\label{Range}
Applying $\phi_1^i$ to $(\pi_j,\tau_i)$ will result in a pair of graphs $(C,D)$ where each graph has exactly one edge out of each of nodes $k+1,\dots,n$, and all red meta-edges not involving special nodes must be cycles.
\end{lemma}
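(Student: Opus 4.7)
The plan is to verify both conclusions by a case split on whether the edge out of the smallest special node $m$ in $(\pi_j,\tau_i)$ is black or red, and to leverage the paper's observation that each $\phi_1^i$ acts on its pair of graphs identically to the original $\phi_1$ from the $k=2$ Red Hot Potato algorithm, so that the corresponding Range Lemma in \cite{fraser} can be invoked once the input hypotheses are checked. The key input conditions are that $(\pi_j,\tau_i)$ has exactly one out-edge at each non-special node in each graph, and that red edges are already arranged either into cycles or into meta-edges whose endpoints are special. These follow directly from the definitions of $S_1$ and $S_2$ at the first iteration of $\phi_1$, and propagate to later iterations by induction on $i$, using that $\phi_1^i$ leaves every graph in the ordered list outside its current pair untouched.

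In the black sub-case of $\phi_1^i$, only a single black edge whose tail is a special node is transferred between the two graphs, so no non-special out-edge is disturbed and no red edge is modified. Both conclusions hold for the output because they held for the input.

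The red crabwalk sub-case is the main obstacle. For the first conclusion, the key observation is that at every non-special node visited by the crabwalk, the walk arrives on a red edge in one graph and leaves on a red edge in the other, and these edges get exchanged between $F$ and $R$, together with the dual exchange of any black out-edges at intermediate nodes of a moved meta-edge; this two-sided swap preserves the count of one out-edge per non-special node in each graph. For the second conclusion, the essential feature of the crabwalk is that it terminates only upon reaching a special node and alternates rigidly between dark red edges (from $F$) and light red edges (from $R$). Consequently any maximal red path in the output that stays entirely among non-special nodes must close up into a cycle formed by the reassembled meta-edge segments, while any red path that eventually touches a special node terminates there as a meta-edge with a special endpoint. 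Because the local dynamics of $\phi_1^i$ on $(\pi_j,\tau_i)$ match the $k=2$ case verbatim, the detailed exchange-by-exchange verification of this invariant is exactly the one carried out in \cite{fraser}, which I would cite to complete the argument rather than reproduce.
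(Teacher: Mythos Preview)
Your proposal is correct and takes essentially the same approach as the paper: both recognize that $\phi_1^i$ acts on its pair exactly as the original $\phi_1$ from the $k=2$ Red Hot Potato algorithm and defer the detailed verification to \cite{fraser}. The paper in fact gives no proof here at all beyond the citation, so your added scaffolding (the black/red case split and the outline of why the crabwalk preserves non-special out-degrees and closes up non-special red paths) is extra exposition rather than a different argument; your discussion of propagating input conditions by induction on $i$ really belongs to Lemma~\ref{genRange} rather than to this lemma, but it does no harm.
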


\begin{lemma}\label{involution}
Each function $\phi_1^i$ is an involution.
\end{lemma}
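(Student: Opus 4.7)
The plan is to reduce the claim to the corresponding fact for the original Red Hot Potato algorithm. Once one peels off how $\phi_1^i$ selects its two input graphs from the ordered list, the function acts on the pair $(\pi_j, \tau_i)$ by exactly the $\phi_1$ procedure of \cite{fraser}, so the argument splits along the two branches in the definition of the crabwalk.

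In the black-edge branch, involution can be verified directly. Let $m$ be the smallest special node with an outgoing edge; by hypothesis the edge is black and lies in $A$. Moving it to $B$ produces a new pair in which $m$ is still the smallest special node with an outgoing edge, the edge is still black, and the new $A$ (the graph now containing the edge out of $m$) is the old $B$. A second application of $\phi_1^i$ therefore moves the same edge back from the new $A$ to the new $B$, recovering the original pair. A small bookkeeping check is needed when $m = k$: in that case the designation of $F$ (the graph with the edge out of $k$) swaps between the two graphs, and one verifies that this swap is consistent so that the reapplication is well-defined; when $m < k$, $F$ is unchanged.

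In the red-edge branch, the crabwalk is performed on the colored graph whose dark red edges come from $F$ and light red edges come from $R$. The walk traces a maximal alternating path that begins at (or ends into) the node $m$ and terminates at some special node in $\{0,\dots,k\}$. After swapping the dark and light colors along this path, a second application of $\phi_1^i$ starts at the endpoint of the first walk, which is the new smallest special node with a relevant outgoing red edge, and traces the same alternating path in the reverse direction. Because the alternating structure at the traversed meta-edges is merely inverted by the color swap, this second walk undoes every color change. This is precisely the involution argument for $\phi_1$ in \cite{fraser}, and it transfers verbatim since $\phi_1^i$ operates on a single pair of graphs via the same procedure.

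The main obstacle, and the one step that genuinely uses Lemma \ref{Range}, is confirming that the output pair of $\phi_1^i$ falls within the domain on which $\phi_1^i$ is again defined: exactly one of the two graphs must still carry the edge out of node $k$, and any red meta-edges outside the special nodes must still be cycles. Once this structural persistence is established, the two cases above combine to give that $\phi_1^i$ is an involution for each $i$.
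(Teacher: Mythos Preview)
The paper does not prove this lemma in the text; it states that $\phi_1^i$ coincides with the map $\phi_1$ of \cite{fraser} and defers the proof there. Your reduction is therefore exactly the paper's approach, and for the black-edge branch your direct verification is fine.

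Your sketch of the red-edge branch, however, contains a genuine slip. You claim that ``a second application of $\phi_1^i$ starts at the endpoint of the first walk, which is the new smallest special node with a relevant outgoing red edge.'' This is not right: after the first crabwalk, node $m$ still has its outgoing red edge (the crabwalk moved it between $F$ and $R$ but did not delete it), and no smaller special node has acquired an outgoing edge, so the new $m$ is the \emph{same} $m$. The second application is therefore initiated according to the rule for $m$ --- now from the ``edge out of $m$ is in $R$'' (respectively $F$) clause rather than the one used before --- and the nontrivial content is to check that this second crabwalk, launched from the root of $m$ in $R$ (respectively from $m$ itself), retraces precisely the alternating path produced by the first walk. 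That verification is the heart of the involution argument in \cite{fraser}; it does not follow just from identifying a starting node. You also flagged the $m=k$ bookkeeping (the $F$/$R$ labels can swap) only in the black case; the same issue must be addressed in the red case, since the crabwalk's direction rules depend on the $F$/$R$ assignment.
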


\begin{lemma}\label{Parity}
If the crabwalk ends in graph $A$, then the parity of cycles (whether there are an odd or even number of cycles) remains the same after applying $\phi_1^i$. If the crabwalk ends in graph $B$, then the parity of cycles changes after applying $\phi_1^i$.
\end{lemma}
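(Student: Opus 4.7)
The plan is to reduce this to the corresponding lemma in \cite{fraser}. Each $\phi_1^i$ acts on its pair of graphs $(\pi_j,\tau_i)$ in exactly the same way that the original $\phi_1$ from the Red Hot Potato algorithm acts on its pair---the remaining $k-2$ graphs in the ordered tuple are not touched by $\phi_1^i$---so the parity-of-cycles analysis in \cite{fraser} applies verbatim once one matches the local data: $A$, $B$, $F$, $R$, the starting node $m$, and the choice of forward or backward launch are identical to their counterparts there. The first step is therefore simply to make that identification explicit and confirm that the ambient $k$ does not create additional structure in the crabwalk colored graph beyond what is already present in the $k=2$ setting.

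To outline the underlying argument, one first forms the crabwalk colored graph with dark red edges from $F$ and light red edges from $R$. Before the walk, the dark edges decompose into the red cycles of $F$ and the light edges decompose into the red cycles of $R$, and the quantity being tracked is the total number of red cycles across the pair. The crabwalk is a maximal alternating walk (forward along dark, backward along light) launched from $m$ (or, in the second subcase, from the light in-edge to $v$), and the involution flips the color of every traversed edge. The proof then comes down to a case analysis on the terminating edge: if its final color places it in $A$, the walk has rearranged the affected portion of the colored graph in a way that preserves the total number of red cycles modulo $2$ (an even change, typically $0$ or $\pm 2$); if the final flipped edge lies in $B$, the walk has either merged two cycle components into one or split one into two, changing the total cycle count by exactly $\pm 1$ and therefore flipping parity.

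The main obstacle is the bookkeeping inside this case analysis: the alternating walk may pass through several existing dark and light cycles before terminating, and one must verify that after the color swaps each such traversal either contributes nothing or contributes exactly the expected single flip to the total cycle count. One must also treat separately the ``launch from $m$ in $F$'' and ``launch from the in-edge of $v$ in $R$'' subcases, and handle the boundary situation where the walk returns to its own starting special node versus terminating at a distinct one. These are precisely the sub-cases worked out in \cite{fraser} for the $k=2$ algorithm, and because $\phi_1^i$ is strictly local to its pair of graphs and to their red edges alone, no new configurations arise in the generalized setting, so no additional cases need to be verified here.
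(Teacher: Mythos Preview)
Your proposal is correct and matches the paper's approach exactly: the paper does not prove this lemma independently but simply restates it from \cite{fraser}, noting that each $\phi_1^i$ is equivalent to the original $\phi_1$ there, which is precisely the reduction you carry out. Your additional outline of the underlying alternating-walk case analysis is a helpful gloss but goes beyond what the paper itself records.
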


We will now use the above Lemmas to inform our proof that $\phi_1$ in the Generalized Red Hot Potato algorithm is a sign-reversing involution.

\begin{lemma}\label{genRange}
Let $(G_1,G_2,\dots,G_k)\in S_1-S_2$. Then $\phi_1((G_1,G_2,\dots,G_k))\in S_1-S_2$.
\end{lemma}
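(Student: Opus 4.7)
The plan is to decouple the two structural invariants that define $S_1\cup S_2$: (a) the non-special-node out-edge structure, which is controlled at each individual step by Lemma \ref{Range}, and (b) the placement of the special-node out-edges, which must finish in either the $S_1$ pattern (all concentrated in the first graph) or the $S_2$ pattern (one per graph). The color constraint --- red edges only on cycles for $S_1$, on cycles or forbidden meta-cycles for $S_2$ --- will follow once (b) is established, since red meta-edges created during a crabwalk can only span special nodes and any purely non-special red meta-edge is forced to be a cycle by Lemma \ref{Range}.

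For input $(G_1,\dots,G_k)\in S_1$, I would proceed by induction on $i$, maintaining the invariant that just before $\phi_1^i$ the $\pi$-graph $\pi_i$ still carries all of the out-edges at special nodes $i,i+1,\dots,k$, that $\tau_i$ has no special out-edges, and that the smallest special node $m$ still holding its edge in $\pi_i\cup\tau_i$ is $m=i$. The crabwalk either moves just the black edge at $i$, or follows a red meta-edge that by the stopping rule of the crabwalk terminates at a special node; in the red case the stopping node must lie in $\{i,i+1,\dots,k\}$ because $\tau_i$ contributes no special edges to $R$. This transfers one or more special out-edges from $\pi_i$ into $\tau_i'$ while preserving the non-special structure by Lemma \ref{Range}. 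After all $k-1$ iterations, the multiset of special out-edges is distributed one per graph, yielding a tuple in $S_2$. For input in $S_2$, a symmetric induction applied to the reverse sequence $\phi_1^{k-1},\ldots,\phi_1^1$ shows that if no step terminates in the $\pi$-graph then all special out-edges collect back into the first graph and the output lies in $S_1$.

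The main obstacle is the reversal subcase for $S_2$ input, where some $\phi_1^j$ terminates at a node of $\pi_{j+1}$ and the definition of $\phi_1$ then pivots, applying $\phi_1^{j+1},\ldots,\phi_1^{k-1}$ forward instead of continuing backward. The critical observation, already recorded in the definition of $\phi_1$, is that ending inside $\pi_{j+1}$ forces the crabwalk to have \emph{not} moved the out-edge at node $j$ out of $\tau_j$; hence $\tau_j'$ still carries exactly one special out-edge. An analogue of the Case 1 induction applied to the forward segment then shows that each $\tau_i$ for $i\ge j$ keeps exactly one special out-edge, and combined with Lemma \ref{involution} to handle self-consistency at the pivot, this produces an output tuple whose special out-edges are still distributed one per graph and whose red edges lie only on cycles or forbidden meta-cycles, i.e. an output in $S_2$. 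Carefully verifying this invariant at the pivot, via the crabwalk's stopping condition and a case analysis on whether the pivot step moved no edges, a black edge, or a nontrivial red meta-edge, is where the bulk of the full proof will sit.
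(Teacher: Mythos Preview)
Your case structure matches the paper's, but two concrete errors in the plan would derail a full proof.

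First, in the $S_1$ input case you claim the red crabwalk ``transfers one or more special out-edges from $\pi_i$ into $\tau_i'$'' and that the stopping node lies in $\{i,\dots,k\}$. Both are wrong. The crabwalk halts at the \emph{first} special node it encounters, so every intermediate node is non-special; hence exactly one special out-edge --- the one at $m=i$ --- is moved, and the stopping node can be any element of $\{0,\dots,k\}$. The correct deduction from ``$\tau_i$ contributes no special out-edges'' is not a restriction on \emph{which} special node is reached, but rather that the crabwalk must terminate while moving \emph{forward} along a dark-red edge of $F=\pi$: terminating while moving backward would require a light-red edge out of a special node, which $\tau_i$ does not have. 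This is what the paper actually argues (``we must end in $\pi$ with an edge going into one of the special nodes''), and it is precisely the input Lemma~\ref{Parity} needs for the sign calculation later. If more than one special out-edge could move in a single step, your own invariant $m=i$ would fail at the next iteration and the conclusion ``distributed one per graph'' would not follow.

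Second, the assertion that the color constraint ``will follow once (b) is established'' underestimates the work required. Knowing the placement of the special out-edges does not by itself force the red edges through special nodes to close up into cycles or forbidden meta-cycles. The paper gives a separate argument: if a special node has its red in-edge and red out-edge in the same graph, one shows it lies on a cycle by analyzing where a hypothetical non-cyclic red meta-edge could begin (and ruling this out in both the $S_1$ and $S_2$ edge configurations); if the red in-edge and out-edge lie in different graphs, one is forced into the $S_2$ configuration, and then one follows the chain of maximal red meta-edges $i\to j\to a_1\to\cdots$ through special nodes and uses pigeonhole on $\{1,\dots,k\}$ to force a return to $i$, producing a forbidden meta-cycle. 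This argument is independent of the crabwalk mechanics and is not a consequence of your part (b).
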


\begin{proof}
To prove that $\phi_1((G_1,G_2,\dots,G_k))=(G_1',G_2',\dots,G_k')\in S_1-S_2$, we need to show that $(G_1',G_2',\dots,G_k')$ has the following two defining characteristics:
\begin{enumerate}
\item There are no edges out of node $0$ in any graph. There is exactly one edge out of each of the nodes $1,\dots,k$, and they are either all in one graph, or exactly one of these $k$ edges is in each graph. There are $k$ edges out of each of the remaining nodes $k+1,\dots,n$, exactly one in each of the $k$ graphs.
\item Red edges are only in cycles (this includes forbidden meta-cycles).
\end{enumerate}

We begin with requirement (a). Since $\phi_1$ simply moves edges around, and does not add or delete any edges, then since we have started with no edges out of node 0, one edge out of nodes $1,\dots,k$, and $k$ edges out of the rest, we will end with that as well. By Lemma \ref{Range}, each $\phi_1^i$ ends with one edge out nodes $k+1,\dots,n$ in each graph for all $i$. Then since $\phi_1$ is merely iterations of $\phi_1^i$, $\phi_1$ also ends with one edge out of nodes $k+1,\dots,n$ in each graph.

To finish with requirement (a), we must show that the single edges out of each of the nodes $1,\dots,k$ will either all end up in the same graph, or all end up in different graphs.

\vspace{5mm}

\textbf{Case 1}
Suppose $(G_1,G_2,\dots,G_k)\in S_1$, so for each $i$, $\tau_i$ has no edges out of the nodes $1\dots,k$. Then in this case for the crabwalk, $A$ and $F$ are both $\pi$. Since we are performing the $\phi_1^i$ in increasing order, the smallest node with an edge out of it will be $i$, and since we begin each $\phi_1^i$ with $\pi$, we will be moving an edge out of $i$ from $\pi$ to $\tau_i$ in the first step. Because there are no edges out of $1,\dots, k$ in any of the $\tau_i$, we must end in $\pi$ with an edge going into one of the special nodes. Thus the edge out of $i$ is the only edge out of a special node that is moved. Thus at the end of $\phi_1$, for all $1\leq i\leq k-1$, $\tau_i'$ will have an edge coming out of $i$, and $\pi$ will have an edge coming out of $k$, so $\phi_1((G_1,G_2,\dots,G_k))=(G_1',G_2',\dots,G_k')\in S_2$ (provided the red edges are in cycles).

\vspace{5mm}

\textbf{Case 2}
Suppose $(G_1,G_2,\dots,G_k)\in S_2$. Since we are moving backwards along iterations of $\phi_1^i$, then when we start, $\tau_i$ will have an edge out of $i$ and $\pi$ will have edges out of larger nodes. Thus for the beginning of $\phi_1$, $m=i$ and in the crabwalk, $\tau_i$ is $A$ and $R$ and $\pi$ is $B$ and $F$. Then we can break this into two further cases:
\begin{enumerate}[label=(\alph*)]
\item Let $\phi_1^i$ end with a node in $\tau_i$ for all $i$. Then since we move backwards in $\tau_i$, we must finish by moving an edge out of a special node. But the only special node with an edge out of it in $\tau_i$ is $i$. Thus we have moved the edge out of $i$ from $\tau_i$ to $\pi$ for all $i$, so when we have finished with $\phi_1$, all edges out of nodes $1,\dots,k$ will be in $\pi$ (and none will be in $\tau_i'$). Then $\phi_1((G_1,G_2,\dots,G_k))=(G_1',G_2',\dots,G_k')\in S_1$ (provided the red edges are in cycles).

\item Let there exist an $i$ for which $\phi_1^i$ ends with a node in $\pi$, and let $\phi_1^j$ be the first time this happens. Then up until $\phi_1^j$, we have proceeded as above. This means that, before we do $\phi_1^j$:

\begin{itemize}
\item $\pi$ has edges out of nodes $j+1,\dots,k$

\item $\tau_i'$ for $i>j$ have no edges out of nodes $1,\dots,k$

\item $\tau_i$ for $i\leq j$ have one edge out of $i$. 
\end{itemize}

Then when we perform $\phi_1^j(\pi,\tau_j)$, we end at a node in $\pi$. That means that we have finished by moving forward along an edge into one of the nodes $1,\dots,k$, so we have not reached the edge out of $j$ in $\tau_j$. Thus the edge out of $j$ remains in $\tau_j'$, and the edges out of $j+1,\cdots k$ are all in $\pi$. Then when we move forwards again, we are in Case 1, so the edges out of $i\in\{j+1,\dots, k-1\}$ get moved out into $\tau_i''$ for all $i$, leaving us with an edge out of $k$ in $\pi$. Thus $\phi_1((G_1,G_2,\dots,G_k))=(G_1',G_2',\dots,G_k')\in S_2$ (provided the red edges are in cycles).
\end{enumerate}

We now prove requirement (b), that the red edges will only be involved in cycles. Since $\phi_1$ is made up of iterations of $\phi_1^i$, it suffices to show that, if we plug in two graphs $\{A,B\}$ with red edges only involved in cycles, $\phi_1^i$ gives us two new graphs $\{C,D\}$ with red edges only involved in cycles. By Lemma \ref{Range}, all red edges not involving the nodes $1,\dots k$ are involved in cycles.

Let us now move on to the red edges involving our special nodes. Because we started with a set of graphs in $S_1-S_2$, if any special node $i$ has a red edge out of it, that node must originally have been part of a cycle or meta-cycle, so across all graphs, there must be exactly one red edge pointed into node $i$. Because $\phi_1$ only moves edges around, this is still true after applying $\phi_1$. If the red edge into and out of a special node $i$ are both in the same graph, then this node will be involved in a cycle: If it were not involved in a cycle, then the meta-edge passing through $i$ would need a place to start, that is a node $j$ where the edge leaving $j$ were red, but the edge entering $j$ was not red. This could only happen at a special node, since non-special nodes can only have red edges involved in cycles. If we were in the edge configuration for $S_2$, this is impossible, because there is only an edge out of node $i$ in this graph, so we could not also have a red edge out of special node $j$. If we were in the edge configuration for $S_1$, then the only way for this to happen would be if the red edge going into $j$ were in a different graph. However, in that case, that red meta-edge pointing into $j$ would need to start somewhere, and because non-special nodes can only have red edges in if there are also red edges out, it would need to start with an edge coming out of a special node. However, in $S_1$, all edges out of special nodes are in the same graph, so there would be nowhere for this meta-edge to start. Thus if $i$ has a red edge both into and out of it in the same graph, then $i$ is part of a red cycle.

Now suppose that $i$ has the red edge into it in one graph and out of it in another graph. We want to show that this means that $i$ is part of a meta-cycle. As seen in the previous paragraph, if the red edges into and out of $i$ are in two different graphs, then we must be in the edge configuration of $S_2$. Since whenever non-special nodes have red edges into them they must also have red edges out, then the red meta-edge out of $i$ must end with a special node $j$. This special node $j$ then has a red edge into it, but the red edge out must be in a different graph since we are in $S_2$. We can continue in this vein, creating a unique sequence of maximal red meta-edges $i\to j,j\to a_1,a_1\to a_2,\dots$ where each $a_i\in\{1,\dots k\}$. We want to show that at some point, the sequence loops back around to $i$, because then the sequence would form a meta-cycle. Let us look at the first $k$ terms of the sequence: $i\to j,j\to a_1,\dots,a_{k-2}\to a_{k-1}$. Then we have $k+1$ nodes represented in these sequences, and we have $k$ nodes to choose them from. By the pigeon hole principle, we must have at least two nodes in our list that are the same. Let $a_v$ be the first node in the list that repeats one we have seen before. Then our list looks like $i\to j, j\to a_1,\dots,a_{v-1}\to a_v$ where $a_v\in\{i,j,a_1,\dots,a_{v-1}\}$. Suppose $a_v\neq i$. Since there is only one red edge into $a_v$, and our sequence consists of maximal red meta-edges, then the entire meta-edge $a_{v-1}\to a_v$ must have shown up in our sequence before. But then $a_{v-1}$ is also a node that we have seen before, contradicting the assumption that $a_v$ was the first. Thus $a_v=i$ and our sequence forms a meta-cycle.

We have now proved that $\phi_1((G_1,\dots,G_k))$ satisfies both conditions required for it to be in $S_1-S_2$. Thus $\phi_1$ is in fact a function from $S_1-S_2$ to $S_1-S_2$.

\end{proof}

\begin{theorem}
The function $\phi_1$ is a sign-reversing involution on $S_1-S_2$.
\end{theorem}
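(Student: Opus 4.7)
The plan is to verify that $\phi_1$ is both an involution and sign-reversing; by Lemma~\ref{genRange} the map $\phi_1$ is already known to send $S_1-S_2$ to itself, so these are the two remaining conditions. In each of the three cases already identified in the proof of Lemma~\ref{genRange} (Case 1 with input in $S_1$, Case 2a and Case 2b with input in $S_2$) the map $\phi_1$ is described as an explicit sequence of atomic moves $\phi_1^i$, so the strategy is to track what happens to this sequence under iteration, and separately to track the parity of red cycles.

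For the involution, I would first handle Case 1. There $\phi_1$ applies $\phi_1^1,\phi_1^2,\dots,\phi_1^{k-1}$ in order, producing an output in $S_2$. Re-applying $\phi_1$ then falls under Case 2, and the expected behaviour is that it runs $\phi_1^{k-1},\phi_1^{k-2},\dots,\phi_1^1$ in reverse, each step using Lemma~\ref{involution} to undo the corresponding forward step and return to the intermediate pair before it. I would verify inductively that this reverse cascade stays inside subcase 2a, by arguing that the crabwalk is symmetric under reversal: the ending graph of each reversed $\phi_1^i$ (the ``$A$'' of the new input) is exactly the graph the forward crabwalk arrived in, which in this context is $\tau_i$. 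A parallel argument handles Case 2a mapping back into Case 1. For Case 2b the critical observation is that the switch criterion---namely that $\phi_1^j$ ends its crabwalk in $\pi$ rather than in $\tau_j$---is itself preserved by Lemma~\ref{involution}, because running $\phi_1^j$ again on the swapped pair traces the same crabwalk backwards and ends in what is now the $B$-graph of that pair. Hence re-applying $\phi_1$ to the Case 2b output rediscovers the same switch index $j$ and unwinds the forward and backward phases in opposite order, each step cancelled by its $\phi_1^i$ partner.

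For sign-reversing, I would invoke Lemma~\ref{Parity} in each case. In Case 1 every crabwalk of $\phi_1^i$ ends in $A=F=\pi$, so the parity of red cycles is preserved throughout; yet the set moves from $S_1$ to $S_2$, and by Definition~\ref{difference} this alone flips the overall sign in $S_1-S_2$. Case 2a is parallel, with each crabwalk ending in $A=R=\tau_i$ and the set moving from $S_2$ back to $S_1$, again flipping the sign. In Case 2b every $\phi_1^i$ ends in $A$ except $\phi_1^j$, which by the switch criterion ends in $B=\pi$ and therefore flips the parity of red cycles exactly once; the set stays in $S_2$, and that single parity change produces the required sign change in $S_1-S_2$.

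The hardest step will be the involution check in Case 2b: one has to show both that the switch index $j$ is rediscovered when $\phi_1$ is applied a second time, and that the resulting spliced sequence of $\phi_1^i$'s pairs up correctly with the original to compose to the identity. This comes down to proving that ``$\phi_1^j$ ending in $\pi$'' is an intrinsic, symmetric feature of the pair on which $\phi_1^j$ acts, invariant under the swap $A\leftrightarrow B$ effected by $\phi_1^j$ itself, which in turn requires a careful reading of the crabwalk path and its reversal.
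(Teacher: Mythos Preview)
Your proposal is correct and follows essentially the same approach as the paper: the same three-case split inherited from Lemma~\ref{genRange}, the same use of Lemma~\ref{Parity} to track the parity of red cycles (preserved in Cases~1 and~2a, flipped exactly once at the switch index $j$ in Case~2b), and the same reliance on Lemma~\ref{involution} together with the observation that re-applying $\phi_1$ runs the constituent $\phi_1^i$'s in the reverse order. The paper is in fact somewhat terser than you are on the Case~2b involution check---it simply asserts that the $\phi_1^i$'s are applied ``to the same sets of graphs in the opposite order''---so your flagging of the rediscovery of the switch index $j$ as the most delicate point is well taken, and your proposed justification via the symmetry of the crabwalk under the $A\leftrightarrow B$ swap is the right idea.
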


\begin{proof}

Lemma \ref{genRange} shows that $\phi_1$ is indeed a function into the correct range. By Lemma \ref{involution}, $\phi_1^i$ is an involution for all $i$. We break our proof that $\phi_1$ is a sign-reversing involution into two cases:

\vspace{5mm}

\textbf{Case 1} Let $(G_1,G_2,\dots G_k)\in S_1$. As observed in the proof of Lemma \ref{genRange}, each iteration of $\phi_1^i$ must end in $A$. Then by Lemma \ref{Parity}, after each iteration of $\phi_1^i$, the parity of cycles remains the same, so after applying $\phi_1$, the parity of the cycles remains the same. As we saw in the proof of Lemma \ref{genRange}, $\phi_1((G_1,G_2,\dots,G_k))=(G_1',G_2',\dots,G_k')\in S_2$. Then since the parity of cycles is the same after applying $\phi_1$, $(G_1',G_2',\dots,G_k')\in S_2$ has the same sign as $(G_1,G_2,\dots G_k)\in S_1$. Thus the sign has changed in $S_1-S_2$. When we apply $\phi_1$ to $(G_1',G_2',\dots,G_k')\in S_2$, where we started with $(G_1,G_2,\dots,G_k)\in S_1$, we perform the $\phi_1^i$ in the opposite order than we did going from $S_1$ to $S_2$. As observed above, $\phi_1^i$ is an involution for all $i$, so $\phi_1(G_1',G_2',\dots,G_k')=(G_1,G_2,\dots,G_k)$. Thus, in this case, $\phi_1$ is a sign-reversing involution.

\vspace{5mm}

\textbf{Case 2} Let $(G_1,G_2,\dots G_k)\in S_2$.
\begin{enumerate}[label=(\alph*)]
\item Let $\phi_1^i$ end with a node in $\tau_i$ for all $i$. Then as we saw in the proof of Lemma \ref{genRange}, each iteration of $\phi_1^i$ must end in $A$. Then by Lemma \ref{Parity}, after each iteration of $\phi_1^i$, the parity of cycles remains the same, so after applying $\phi_1$, the parity of the cycles remains the same. As we saw in the proof of Lemma \ref{genRange}, $\phi_1((G_1,G_2,\dots,G_k))=(G_1',G_2',\dots,G_k')\in S_1$. Then since the parity of cycles is the same after applying $\phi_1$, $(G_1',G_2',\dots,G_k')\in S_1$ has the same sign as $(G_1,G_2,\dots G_k)\in S_2$. Thus the sign has changed in $S_1-S_2$. Similar to Case 1, when we apply $\phi_1$ to $(G_1',G_2',\dots,G_k')\in S_1$, we are applying the involutions $\phi_1^i$ in the opposite order, so we are undoing each step of our original $\phi_1$. Thus in this case, $\phi_1$ is a sign-reversing involution.

\item Let there exist an $i$ for which $\phi_1^i$ ends with a node in $\pi$, and let $\phi_1^j$ be the first time this happens. Then, as we saw in the proof of Lemma \ref{genRange}, up until performing $\phi_1^j$, we have ended in $A$, so until we apply $\phi_1^j$, the parity of the cycles remains the same. When we apply $\phi_1^j$, we end in $B$, so the parity of cycles switches. Then, as we move back up the $\phi_1^i$, we again end in $A$ each time, so the parity remains the same. Thus, once we have finished with $\phi_1$, we have ended in $B$ exactly one time, so the parity of cycles has changed. As we saw in the proof of Lemma \ref{genRange}, $\phi_1((G_1,G_2,\dots,G_k))=(G_1',G_2',\dots,G_k')\in S_2$. Then, since the parity of cycles has changed, the sign has as well.

Since after performing $\phi_1$, the graphs remain in the same order, then when we apply $\phi_1$ to $(G_1',G_2',\dots,G_k')\in S_2$, we are applying the involutions $\phi_1^i$ to the same sets of graphs in the opposite order, again undoing each step of the original $\phi_1$. Thus, again, $\phi_1$ is a sign-reversing involution.
\end{enumerate}

\end{proof}

\section{Mathematica Code Example}

The author programmed the generalized Red Hot Potato algorithm into Mathematica. The replication code can be found at \url{https://github.com/mcfraser3/generalizedRHP}. In this program the user can enter a $k$-tuple of forests from either $S_0$ or $S_3$ and the program will generate a list of graphs similar to the list we saw in Section 4, ending with the corresponding $k$-tuple of forests in $S_3$ or $S_0$ (respectively). Below is an example of the code output. The actual list that the code generates contains every edge moved in the order that they are moved, but for the sake of space we will only show the outcome of each application of $\phi_0,\phi_1,\phi_2$.

We begin with a set in $S_0$. Notice that the left-most graph is a tree and the three on the right are $5$-forests (there are no edges out of nodes $0,1,2,3,4$).
\begin{center}
    \includegraphics[scale=3.5]{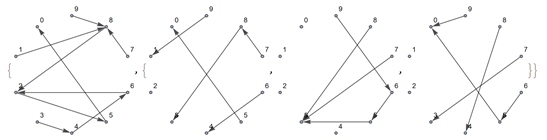}
\end{center}

$\phi_1$ moves edges from the tree to each of the forests.

\begin{center}
    \includegraphics[scale=3.5]{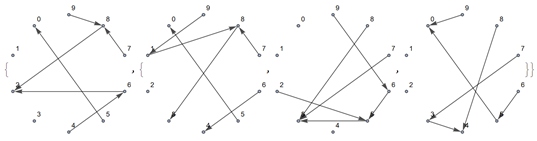}
\end{center}

The program highlights in red the meta-cycle that makes this a set of forbidden forests.

\begin{center}
    \includegraphics[scale=3.5]{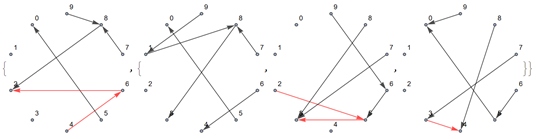}
\end{center}

The program applies $\phi_1$ again, this time with red edges.

\begin{center}
    \includegraphics[scale=3.5]{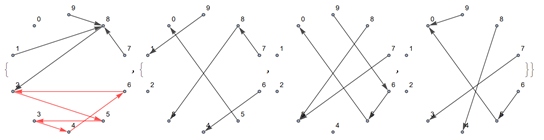}
\end{center}

Now the program changes the color of the cycle.

\begin{center}
    \includegraphics[scale=3.5]{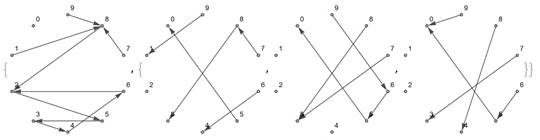}
\end{center}

Finally, after applying $\phi_1$, our program gives us the final set in $S_3$ that is the overall result of our bijection.

\begin{center}
    \includegraphics[scale=3.5]{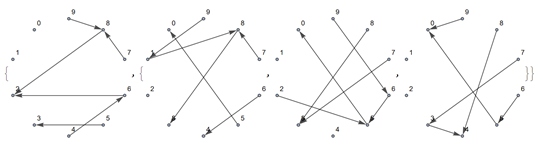}
\end{center}

\section{\large{Proofs of Generalized Forest and Dodgson/Muir Identities}}

We will restate and prove our two identities using the generalized Red Hot Potato algorithm.

\begin{theorem}
\textbf{Generalized Forest Identity.} For any $k$, let $\sigma_{\text{id}}$ be the identity permutation in $S_k$, and let $R^{NF}$ be the set $R^{\sigma_{\text{id}}}\setminus \bigcup_{\sigma\in S_k, \sigma\neq \sigma_{\text{id}}}R^{\sigma}$. Then
$$\sum_{(F_1,F_2,\dots,F_k)\in R_0\times (R_{[k]})^{k-1}}\prod_{i=1}^k a_{F_i}=\sum_{(F_1,F_2,\dots,F_k)\in R^{NF}}\prod_{i=1}^k a_{F_i}.$$
\end{theorem}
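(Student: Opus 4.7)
The plan is to invoke Theorem \ref{invothm} on the four-term sequence $S_0, S_1, S_2, S_3$ together with the three sign-reversing involutions $\phi_0, \phi_1, \phi_2$ constructed in Section 3. By design $S_0$ consists precisely of the ordered $k$-tuples that index the left-hand side of the identity, and $S_3$ consists precisely of the non-forbidden $k$-tuples that index the right-hand side; both sets carry only the positive sign, so the endpoint hypothesis of the Involution Principle is satisfied.

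The three hypotheses on the involutions are already available: Section 3.2.1 verifies that $\phi_0$ and $\phi_2$ are sign-reversing involutions on $S_0 - S_1$ and $S_2 - S_3$, and Section 5 provides the same statement for $\phi_1$ on $S_1 - S_2$. Applying Theorem \ref{invothm} then produces a constructive bijection $\Phi : S_0 \to S_3$, computed by the iterative procedure described at the end of Section 3.3 (apply $\phi_0$, then $\phi_1$, then $\phi_2$, repeating on the intermediate output until the result lands in $S_3$).

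The step that turns this bijection into the identity is the observation that $\Phi$ preserves the monomial $\prod_{i=1}^k a_{F_i}$. The involutions $\phi_0$ and $\phi_2$ either leave a $k$-tuple fixed (re-interpreting which signed set contains it) or merely recolor a single cycle, in each case keeping the underlying edge data unchanged. The involution $\phi_1$ rearranges edges between the $k$ graphs through the crabwalk, but Lemma \ref{genRange} and the description of the crabwalk show that no edge is ever introduced or deleted; the multiset of directed edges across all $k$ graphs is invariant under $\phi_1$. Since $\prod_{i=1}^k a_{F_i}$ is the product of the weights of every edge appearing in the $k$-tuple, it is preserved by each step, and hence by $\Phi$. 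Summing over $S_0$ and comparing with the sum over $\Phi(S_0) = S_3$ yields the Generalized Forest Identity.

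The main conceptual obstacle here is not in the present proof but one section earlier: showing that $\phi_1$ actually lands in $S_1 - S_2$ (respecting both the edge-configuration and the cycle-coloring constraints) and reverses sign. This is exactly the content of Lemma \ref{genRange} and the theorem in Section 5, where the branching between Case 1 and Case 2 of the definition of $\phi_1$ and Lemma \ref{Parity}'s parity bookkeeping are needed. Once that work is in place, the proof of the identity itself is purely a matter of citing Theorem \ref{invothm} and noting edge-conservation.
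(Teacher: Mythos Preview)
Your proof is correct and follows the paper's own argument essentially verbatim: invoke Theorem \ref{invothm} on the sequence $S_0,S_1,S_2,S_3$ with the sign-reversing involutions $\phi_0,\phi_1,\phi_2$ established in Sections 3 and 5, then observe that the resulting bijection preserves $\prod_i a_{F_i}$ because the algorithm only moves or recolors edges. The paper's proof is terser but identical in content.
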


\begin{proof}
Because $\phi_0$, $\phi_1$, and $\phi_2$ are all sign-reversing involutions, by Theorem \ref{invothm}, the generalized Red Hot Potato algorithm is a bijection between $S_0$ and $S_3$. Notice that $S_0=(F_1,F_2,\dots,F_k)\in R_0\times (R_{[k]})^{k-1}$ and $S_3=(F_1,F_2,\dots,F_k)\in R^{NF}$. Thus we have a bijection between the sets each side of the identity sums over. Because the generalized Red Hot Potato algorithm merely moves edges between the forests in a given ordered set, $\prod_{i=1}^ka_{F_i}$ does not change after applying the algorithm. Thus the identity holds.
\end{proof}

\begin{theorem}
\textbf{Dodgson/Muir Identity.} Let $M$ be a square $n\times n$ matrix, and let $k$ be any integer $1\leq k\leq n$. Then
\begin{equation}
\begin{split}
\det(M)\cdot\det&(M[\{k+1,\dots,n\},\{k+1,\dots,n\}])^{k-1}=\\
&\sum_{\sigma\in S_k}\text{sgn}(\sigma)\prod_{i=1}^k\det(M[\{i,k+1,\dots,n\},\{\sigma(i),k+1,\dots,n\}]).
\end{split}
\end{equation}
\end{theorem}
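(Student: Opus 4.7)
The plan is to run the Matrix Tree Theorem bridge of Section~2 in reverse, deriving the Dodgson/Muir identity as a consequence of the Generalized Forest Identity (Theorem~\ref{genforestidentity}) that has just been established. First, given an arbitrary $n\times n$ matrix $M$, I would construct a Laplacian $A$ on the node set $\{0,1,\dots,n\}$ satisfying $M = A[\{1,\dots,n\},\{1,\dots,n\}]$. Concretely, set edge weights $a_{ij} := -M_{ij}$ for $1 \leq i \neq j \leq n$, choose $a_{i0}$ for $1 \leq i \leq n$ so that $\sum_{k\neq i} a_{ik} = M_{ii}$ (which forces $a_{i0} = M_{ii} + \sum_{j\neq i,\, j\geq 1} M_{ij}$), and set $a_{0j}$ for $j \geq 1$ arbitrarily; the latter values never enter any minor that appears in the identity, since row $0$ is removed from every such minor.

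Next, I would check that for this $A$, the Dodgson/Muir identity for $M$ coincides termwise with Eq.~\ref{gendodgson} for $A$. Because deleting row $0$ and column $0$ from $A$ yields exactly $M$, one has $M[U,W] = A[U,W]$ for every $U, W \subseteq \{1,\dots,n\}$, and every minor on both sides of Dodgson/Muir is such an $M[U,W]$. In particular $\det(M) = \det(A[\{1,\dots,n\},\{1,\dots,n\}])$, $\det(M[\{k+1,\dots,n\},\{k+1,\dots,n\}])$ matches the corresponding minor of $A$, and each factor inside the product over $\sigma$ matches. So the two identities become literally the same equation after $M$ is replaced by $A[\{1,\dots,n\},\{1,\dots,n\}]$.

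Finally, I would invoke Section~2 in reverse. The All Minors Matrix Tree Theorem together with Lemmas~\ref{sigma} and~\ref{toggdiag} and the cancellation theorem that immediately follows shows that Eq.~\ref{gendodgson} is the polynomial identity in the edge weights $a_{ij}$ obtained by expressing each minor of $A$ as a signed sum of forest weights and then carrying out the cycle-color/permutation-sign bookkeeping. Every step in that derivation is an equality in the polynomial ring $\Z[a_{ij}]$, so the implication runs in both directions: Eq.~\ref{gendodgson} is equivalent to the Generalized Forest Identity, not merely a consequence of it. Since the Generalized Forest Identity has been proved directly by the generalized Red Hot Potato algorithm, Eq.~\ref{gendodgson} holds for every assignment of edge weights, and in particular for the assignment chosen above, yielding the Dodgson/Muir identity for $M$.

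The subtlest point, and the one deserving the most care, is this bidirectionality: Section~2 is phrased as deriving the forest identity from Dodgson/Muir, so I would need to verify that the intermediate correspondences (between minors of $A$ and signed forest-sums, between $\sigma$ and meta-cycles, and between the $2^m$ permutations representing a configuration with $m$ meta-cycles) are all genuine polynomial equalities rather than one-way implications. Once that is in hand, the only remaining worry is whether the Laplacian construction could impose a hidden algebraic constraint on $M$; the free parameters $a_{i0}$ (tuned to match the diagonal of $M$) and $a_{0j}$ (which do not appear in any relevant minor) provide exactly the slack needed to realize every $n \times n$ matrix, so no constraint is imposed and the identity transfers cleanly from $A$ to $M$.
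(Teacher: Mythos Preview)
Your proposal is correct and follows the same route as the paper: construct a Laplacian $A$ on $\{0,1,\dots,n\}$ with $A_{0,0}=M$ (your choice $a_{i0}=\sum_j M_{ij}$ is exactly the paper's), identify the Dodgson/Muir identity for $M$ with Eq.~\ref{gendodgson} for $A$, and then read Section~2 in reverse to obtain Eq.~\ref{gendodgson} from the Generalized Forest Identity. The only difference is that you make explicit the bidirectionality of the Section~2 correspondences, which the paper asserts but does not spell out.
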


\begin{proof}
As outlined in section 2, the Generalized Forest Identity proves the Dodgson/Muir Identity provided that $M=A_{0,0}$ where $A$ is the Laplacian for some directed graph. Let $M$ be an arbitrary $n\times n$ matrix. Construct a graph $G$ on node set $\{0,1,\dots,n\}$ such that every edge $i\to j$ has an edge weight given by negative the entry in the $i$th row, $j$th column of $M$ provided $i,j\neq 0$. Let edges $0\to i$ have weight $0$ for all $i$, and let edges $i\to 0$ have an edge weight given by the sum of the $i$th row of $M$ for all $i$. Then $M=A_{0,0}$ where $A$ is the Laplacian of $G$.
\end{proof}

\bibliographystyle{plain}


\begin{thebibliography}{10}
\bibitem{brualdi} Adam Berliner and Richard A. Brualdi. A combinatorial Proof of the {Dodgson/Muir} determinantal identity. \textit{International Journal of Information and Systems Sciences}, 4(1):1-7,2008.

\bibitem{brown} Francis Brown and Karen Yeats. Spanning forest polylnomials and the transcendental weight of Feynman graphs. \textit{Communications in mathematical physics}, 301(2):357-382,2011.

\bibitem{chaiken} Seth Chaiken. A combinatorial proof of the all minors matrix tree theorem. \textit{SIAM Journal on Algebraic Discrete Methods}, 3(3):319-329,1982.

\bibitem{fraser} Melanie Fraser. Lewis Carroll and the Red Hot Potato: A graph theoretic approach to a linear algebraic identity. \textit{Discrete Mathematics}, 344(1):112160,2021.

\bibitem{garsia} Adriano M. Garsia and Stephen C. Milne. Method for constructing bijections for classical partition identities. \textit{Proceedings of the National Academy of Sciences}, 78(4):2026-2028, 1981.

\bibitem{golz} Marcel Golz. Dodgson Polynomial Identities. \textit{Communications in Theoretical Physics}, 13(4):667-723, 2019.

\bibitem{sally} Sally Picciotto. \textit{How to encode a tree}. PhD thesis, University of California, San Diego, 1999.

\bibitem{schnetz} Oliver Schnetz. Geometries in perturbative quantum field theory. \textit{arXiv preprint arXiv: 1905.08083}, 2019.

\bibitem{zeilberger} Doron Zeilberger. Dodgson's determinant-evaluation rule proved by two-timing men and women. \textit{Electron. J. Combin.}, 4(2):R22, 1997.


\end{thebibliography}
\end{document}